\documentclass[11pt]{amsart}

\usepackage{amsfonts,amssymb,amscd,amsmath,enumerate,verbatim,calc}
\usepackage{amsmath}
\usepackage{amssymb}
\usepackage{amscd}
\usepackage{color}

\usepackage[T1]{fontenc}

\usepackage{graphicx}

\setlength{\footskip}{.5in}
\setlength{\textwidth}{5.4in} \setlength{\oddsidemargin}{1.5cm}
\setlength{\evensidemargin}{1.5cm}

\def\RMN#1{\uppercase\expandafter{\romannumeral#1}}

\newcommand{\proj}{\operatorname{Proj}}
\newcommand{\length}{\operatorname{length}}
\newcommand{\spec}{\operatorname{Spec}}

\newcommand{\rank}{\operatorname{rank}}


\newcommand{\subq}{_{\Bbb Q}}

\newtheorem{Theorem}{Theorem}[section]
\newtheorem{Lemma}[Theorem]{Lemma}
\newtheorem{Corollary}[Theorem]{Corollary}
\newtheorem{Proposition}[Theorem]{Proposition}
\newtheorem{Conjecture}[Theorem]{Conjecture}
\newtheorem{Claim}[Theorem]{Claim}

\newtheorem{Remark}[Theorem]{Remark}
\newtheorem{Example}[Theorem]{Example}
\newtheorem{Assumption}[Theorem]{Assumption}
\newtheorem{Question}[Theorem]{Question}

\newcommand{\new}[1]{{\color{blue} #1}}


\DeclareMathOperator{\Pic}{Pic}

\DeclareMathOperator{\depth}{depth}

\DeclareMathOperator{\Cl}{Cl}

\newcommand{\CC}{\mathbf{C}}

\newcommand{\ZZ}{\mathbb{Z}}
\newcommand{\QQ}{\mathbb{Q}}
\newcommand{\RR}{\mathbb{R}}

\begin{document} 
\title[Boundary of Cohen-Macaulay cone]{Boundary and shape of Cohen-Macaulay cone}

\author{Hailong Dao}
\address{Department of Mathematics\\
University of Kansas\\
Lawrence, KS 66045-7523, USA}
\email{hdao@ku.edu}

\author{Kazuhiko Kurano}
\address{Department of Mathematics\\
School of Science and Technology\\
Meiji University\\
Higashimita 1-1-1, Tama-ku, Kawasaki-shi 214-8571, Japan}
\email{kurano@isc.meiji.ac.jp}
\date{}
\begin{abstract}
Let $R$ be a Cohen-Macaulay local domain. In this paper we study the cone of Cohen-Macaulay modules inside the Grothendieck group of finitely generated $R$-modules modulo 
numerical equivalences, introduced in \cite{CK}.  We prove a result about the boundary of this cone for Cohen-Macaulay domain admitting de Jong's alterations, and use it to derive some corollaries on finiteness of  isomorphism classes of maximal Cohen-Macaulay  ideals. Finally, we explicitly compute the Cohen-Macaulay cone for certain isolated hypersurface singularities defined by $\xi\eta - f(x_1, \ldots, x_n)$. 
\end{abstract}

\thanks{The first author is partially supported by NSF grant DMS 1104017. 
The second author is partially supported by JSPS KAKENHI Grant 24540054.} 

\maketitle

\section{Introduction}\label{intro}
Let $R$ be a Noetherian local ring and $G_0(R)$ the Grothendieck group of  finitely generated $R$-modules. Using   Euler characteristic of perfect complexes with finite length homologies (a generalized version of Serre's intersection multiplicity pairings), one could define the notion of {\it numerical equivalence} on  $G_0(R)$ as in \cite{K23}. See Section \ref{sec2} for precise definitions. When $R$ is the local ring at the vertex of an affine cone over a smooth projective variety $X$, this notion can be {deeply} related to that of numerical equivalences on the Chow group of $X$
as in \cite{K23} and \cite{RS}. Let $\overline{G_0(R)}$ be the Grothendieck group of $R$ modulo numerical equivalences. 

A simple result in homological algebra tells us that if $M$ is maximal Cohen-Macaulay (MCM), the Euler characteristic function will always be positive. Thus, maximal Cohen-Macaulay modules all survive in $\overline{G_0(R)}$, and it makes sense to talk about the cone of Cohen-Macaulay modules inside $\overline{G_0(R)}_{\Bbb R}$:

\[
C_{CM}(R) = \sum_{M: {\rm MCM}} {\Bbb R}_{\ge 0}[M] \subset \overline{G_0(R)}_{\Bbb R}.
\]
	The definition of this cone and some of its basic  properties was given in \cite{CK}.  Understanding the Cohen-Macaulay cone is quite challenging, as it encodes a lot of non-trivial information about both the category of maximal Cohen-Macaulay modules and the local intersection theory on $R$. 

For example, let's consider the hypersurface $k[[x,y,u,v]]/(xy-uv)$. Then, the main result of \cite{DHM} tells us that $\overline{G_0(R)}_{\Bbb R}$ is two dimensional, and one can pick a basis consisting of $[R]$ (vertical)  and $[R/(x,u)]$ (horizontal). Since the maximal Cohen-Macaulau modules are completely classified (\cite{K}), one can compute the Cohen-Macaulay cone of the  as shown in Figure \ref{fig1}. It is the blue region between the rays from the origin to the points represented by the modules $(x,u)$ and $(x,v)$. Later in the paper (Section \ref{CMcones} we shall  compute these cones for a big class of hypersurface singularities.
\begin{figure}\label{fig1}
    \centering
    \includegraphics[width=200pt,height=200pt,scale=0.4]{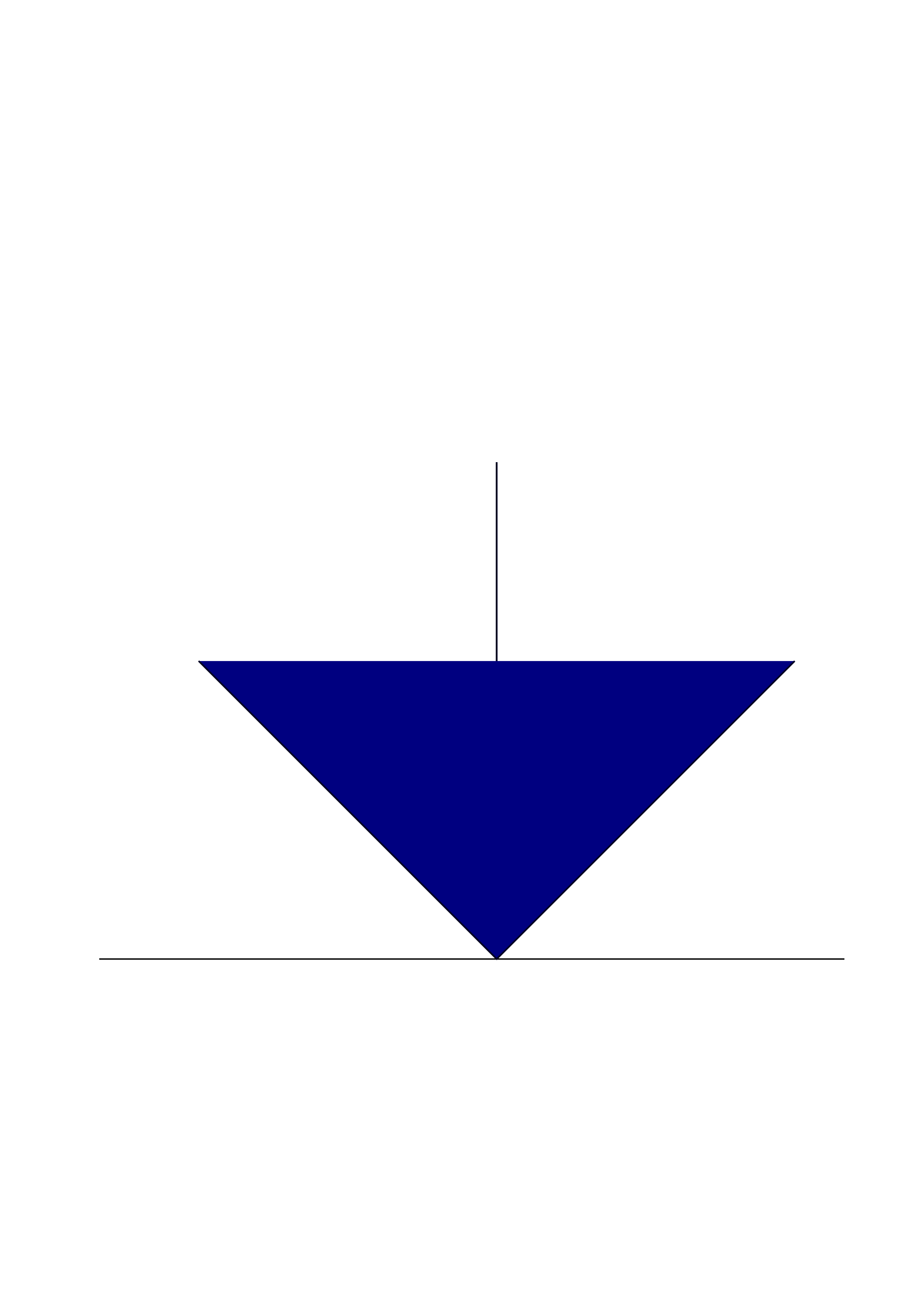}
    \caption{CM cone for $k[[x,y,u,v]]/(xy-uv)$}
    \label{fig:awesome_image}
\end{figure}

One of the main technical results of this paper, Theorem \ref{2.1}, asserts that if $R$ is a Cohen-Macaulay domain admitting de Jong's alterations (a weak version of resolution of singularity), the closure of $C_{CM}(R)$ intersects with the hyperplane of torsion modules only at the origin. This  has some surprising consequences, which we now describe. 

The first consequence is that given any integer $r$, the set of isomorphism classes of  maximal Cohen-Macaulay modules of rank $r$ up to numerical equivalence is finite (Theorem \ref{1.2}). This is interesting as results from Cohen-Macaulay representation theory tells us that the set of these modules in any given rank is typically very big (see \cite{LW} or \cite{Y} for some nice introduction to this topic. When $r=1$, our results  confirm a number of cases when the set of Cohen-Macaulay ideals are actually finite. This is raised in a question attributed to Hochster, which we discuss in details  in Section \ref{sechoch}.

{In Section \ref{4.1} we study when the map $A_{d-1}(R) \longrightarrow \overline{A_{d-1}(R)}$ has finite kernel. This turns out to be a rather subtle question, even when $R$ is the localization at the vertex of a cone over a smooth projective variety. However, we are able to prove that the map is an isomorphism for most isolated hypersurface singularities of dimension $3$ (Proposition \ref{1.6}).}

Finally, in Section \ref{CMcones} we describe explicitly the Cohen-Macaulay cone for a  class of local hypersurfaces defined by a power series of the form $\xi\eta - f(x_1, \ldots, x_n)$. These are first examples of non-trivial Cohen-Macaulay cones in higher dimensions over rings not of finite Cohen-Macaulay type, and the mere fact that they can actually be computed even for this special class of singularities is quite encouraging. 

{In an upcoming paper, we shall apply the results here to study asymptotic behavior of systems of ideals. 

We thank Hubert Flenner, Ryo Takahashi, Charles Vial and Yuji Yoshino for many helpful conversations. We also thank Eleonore Faber for helping us to translate Karroum's thesis (\cite{K})}.

\section{Notations and preliminary results}\label{sec2}
\setcounter{equation}{0}

We always assume that a Noetherian local ring in this paper
is a homomorphic image of a regular local ring.

For a Noetherian local ring $R$,
we denote the Grothendieck group of  finitely generated $R$-modules
(resp.\ the Chow group of $R$) by $G_0(R)$ (resp.\  $A_*(R)$).
We refer the reader to \cite{F} for basic facts on them.
Let us recall the definition of numerical equivalence on $G_0(R)$ (see \cite{K23}).

For a bounded finite $R$-free complex ${\Bbb F}.$ with homologies of finite length,
we define
\[
\chi_{{\Bbb F}.} : G_0(R) \longrightarrow {\Bbb Z}
\]
to be \[
\chi_{{\Bbb F}.}([M]) = \sum_{i}(-1)^i\ell(H_i({\Bbb F}.\otimes_RM)) .
\]
We say that a cycle $\alpha$ in $G_0(R)$ is {\em numerically equivalent to
$0$} if $\chi_{{\Bbb F}.}(\alpha) = 0$ for any bounded finite $R$-free complex ${\Bbb F}.$ with homologies of finite length.
In the same way, we say that a cycle $\beta$ in $A_*(R)$
is {\em numerically equivalent to
$0$} if ${\rm ch}({\Bbb F}.)(\beta) = 0$ for any above ${\Bbb F}.$,
where ${\rm ch}({\Bbb F}.)$ is the localized Chern character
(see Chapter~17 in \cite{F}).
We denote by $\overline{G_0(R)}$ and $\overline{A_*(R)}$
the groups modulo numerical equivalence, that is,
\begin{eqnarray*}
\overline{G_0(R)} & = & \frac{G_0(R)}{\{ \alpha \mid \mbox{$\chi_{{\Bbb F}.}(\alpha) = 0$ for any ${\Bbb F}.$} \}} \\
\overline{A_*(R)} & = & \frac{A_*(R)}{\{ \beta \mid \mbox{${\rm ch}({\Bbb F}.)(\beta) = 0$ for any ${\Bbb F}.$} \}}
.
\end{eqnarray*}
It is proven in Proposition~2.4 in \cite{K23} that
numerical equivalence is consistent with dimension of cycles
in $A_*(R)$, so we have
\[
\overline{A_*(R)} = \oplus_{i = 0}^d\overline{A_i(R)} .
\]
The Riemann-Roch map $\tau$ preserves numerical equivalence
as in \cite{K23},
that is, it induces the isomorphism $\overline{\tau}$
that makes the following diagram commutative:
\begin{equation}\label{equation2}
\begin{array}{ccc}
G_0(R)_{\Bbb Q} & \stackrel{\tau}{\longrightarrow} & A_*(R)_{\Bbb Q} \\
\downarrow & & \downarrow \\
\overline{G_0(R)}_{\Bbb Q} & \stackrel{\overline{\tau}}{\longrightarrow} & \overline{A_*(R)}_{\Bbb Q}
\end{array}
\end{equation}
Here $N_K$ denotes $N \otimes_{\Bbb Z} K$.

If $R$ is Cohen-Macaulay,
the Grothendieck group of bounded $R$-free complexes
with support in $\{ {\frak m} \}$ is generated by finite free resolutions of modules of finite length and finite projective dimension (see Proposition~2 in \cite{RS}).
Therefore, in this case,
$\alpha$ in $G_0(R)$ is numerically equivalent to $0$
if and only if $\chi_{{\Bbb F}.}(\alpha) = 0$ for any free resolution ${\Bbb F}.$ of a module with finite length and
finite projective dimension.

\begin{Assumption}\label{assume}
\begin{rm}
Let $R$ be a Noetherian local ring such that,
for each minimal prime ideal ${\frak p}$ of $R$,
there exists a proper generically finite morphism
$Z \rightarrow \spec R/{\frak p}$ such that $Z$ is regular.
\end{rm}
\end{Assumption}

By Hironaka~\cite{Hironaka} and deJong~\cite{dJ},  $R$ satisfies Assumption~\ref{assume}
if $R$ satisfies one of the following two conditions.
\begin{enumerate}
\item
$R$ is an excellent local ring containing ${\Bbb Q}$.
\item
$R$ is essentially of finite type over a field,
${\Bbb Z}$ or a complete discrete valuation ring.
\end{enumerate}

By Theorem~3.1 and  Remark~3.5 in \cite{K23}, both $\overline{G_0(R)}$ and $\overline{A_*(R)}$ are non-zero
finitely generated free abelian group if
$R$ satisfies Assumption~\ref{assume}.

\begin{Example}
\begin{rm}
Let $R$ be a Noetherian local ring satisfying Assumption~\ref{assume}.
\begin{enumerate}
\item
If ${\Bbb K}.$ is the Koszul complex of a system of parameters,
then $\chi_{{\Bbb K}.}([R]) \neq 0$.
Hence, $\overline{G_0(R)} \neq 0$.
\item
If $R$ is a Noetherian local domain with $\dim R \le 2$, then $\mathop{\rm rank} \overline{G_0(R)} = 1$. See Proposition~3.7 in \cite{K23}.
\item
Let $X$ be a smooth projective variety with embedding $X \hookrightarrow {\Bbb P}^n$.
Let $R$ (resp.\ $D$) be the affine cone (resp.\ the very ample divisor) of this embedding.
Then, we have the following commutative diagram:
\[
\begin{array}{ccccc}
G_0(R)_{\Bbb Q} & \stackrel{\sim}{\longrightarrow} & A_*(R)_{\Bbb Q} & 
\stackrel{\sim}{\longleftarrow} & {\rm CH}^\cdot(X)_{\Bbb Q}/ D \cdot {\rm CH}^\cdot(X)_{\Bbb Q} \\
\downarrow & & \downarrow & & \downarrow \\
\overline{G_0(R)}_{\Bbb Q} & \stackrel{\sim}{\longrightarrow} & \overline{A_*(R)}_{\Bbb Q} & 
\stackrel{\phi}{\longleftarrow} & {\rm CH}^\cdot_{num}(X)_{\Bbb Q}/ D \cdot {\rm CH}^\cdot_{num}(X)_{\Bbb Q}
\end{array}
\]
\begin{enumerate}
\item
By the commutativity of this diagram, $\phi$ is a surjection.
Therefore, we have
\begin{equation}\label{uenosiki}
\rank \overline{G_0(R)} \le \dim_{\Bbb Q} {\rm CH}^\cdot_{num}(X)_{\Bbb Q}/ D \cdot {\rm CH}^\cdot_{num}(X)_{\Bbb Q} .
\end{equation}
\item
If ${\rm CH}^\cdot(X)_{\Bbb Q} \simeq {\rm CH}^\cdot_{num}(X)_{\Bbb Q}$, then
we can prove that
 $\phi$ is an isomorphism (\cite{K23}, \cite{RS}).
In this case, the equality holds in (\ref{uenosiki}).
\item
There exists an example such that $\phi$ is not an isomorphism \cite{RS}.

Further, Roberts and Srinivas \cite{RS} proved the following:
Assume that the standard conjecture and Bloch-Beilinson conjecture are true.
Then $\phi$ is an isomorphism if the defining ideal of $R$ is generated by polynomials with coefficients in the algebraic closure of the prime field.
\end{enumerate}
\item
It is conjectured that $\overline{G_0(R)}_{\Bbb Q} \simeq {\Bbb Q}$ if
$R$ is complete intersection isolated singularity with $d$ even.
See Conjecture~3.2 in \cite{DK1}.
\end{enumerate}
\end{rm}
\end{Example}

Let $R$ be a $d$-dimensional Noetherian local ring.
For $\alpha \in G_0(R)_{\Bbb Q}$,
we put \[
\tau(\alpha) = \tau_d(\alpha) + \tau_{d-1}(\alpha) + \cdots + \tau_0(\alpha) ,
\]
where $\tau_i(\alpha) \in A_i(R)_{\Bbb Q}$ (see the diagram~(\ref{equation2})).
Furthermore, assume that $R$ is a normal ring.
Then, we have the {\em determinant map} (or the {\em first Chern class})
$$c_1 : G_0(R) \to A_{d-1}(R)$$
that satisfies
\begin{itemize}
\item
$c_1([R]) = 0$,
\item
$c_1([R/I]) = -c_1([I]) = [\spec R/I]$ for each reflexive ideal $I$,
\item
$c_1([M]) = 0$ if $\dim M \le d-2$.
\end{itemize}
Here, $[\spec R/I]$ denotes
\[
\sum_{P \in {\rm Min}_R(R/I)}\ell_{R_P}((R/I)_P)[\spec R/P] \in  A_{d-1}(R)
\]
for a reflexive ideal $I$ of a Noetherian normal domain $R$.

For an $R$-module $M$, we have
\begin{equation}\label{tau(d-1)}
\tau_{d-1}([M]) = c_1([M]) - \frac{\rank M}{2}K_R
\end{equation}
in $A_{d-1}(R)_{\Bbb Q}$, where $K_R$ is the canonical divisor of $R$,
that is, $K_R = c_1([\omega_R])$ where $\omega_R$ is the canonical module of $R$.
We refer the reader to Lemma~2.9 in \cite{K24} for the above equality.
(Remark that the map $cl$ in \cite{K24} is equal to $-c_1$ in this paper.)

\begin{Lemma}\label{1.3}
Assume that $R$ is a $d$-dimensional Noetherian normal local domain
that satisfies Assumption~\ref{assume}.
Then, there is the map $\overline{c_1}$
that makes the following diagram commutative:
\begin{equation}\label{d1}
\begin{array}{ccc}
G_0(R) & \stackrel{c_1}{\longrightarrow} & A_{d-1}(R) \\
\downarrow & & \downarrow \\
\overline{G_0(R)} & \stackrel{\overline{c_1}}{\longrightarrow} & \overline{A_{d-1}(R)}
\end{array}
\end{equation}
\end{Lemma}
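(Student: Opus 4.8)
The plan is to deduce this from the Riemann--Roch formula (\ref{tau(d-1)}) together with two facts recalled above from \cite{K23}: that the Riemann--Roch map $\tau$ preserves numerical equivalence, and that numerical equivalence on $A_*(R)$ is compatible with the grading by dimension, i.e. $\overline{A_*(R)} = \oplus_{i=0}^{d}\overline{A_i(R)}$. To construct $\overline{c_1}$ it is enough to show that $c_1$ carries every cycle $\alpha \in G_0(R)$ that is numerically equivalent to $0$ to a cycle numerically equivalent to $0$ in $A_{d-1}(R)$. Since $\overline{A_{d-1}(R)}$ is a finitely generated free abelian group under Assumption~\ref{assume}, hence torsion-free, this can be checked after tensoring with ${\Bbb Q}$; so it suffices to prove that $c_1(\alpha)$ is numerically equivalent to $0$ in $A_{d-1}(R)_{\Bbb Q}$.

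Let $\alpha$ be numerically equivalent to $0$. First I would record that $\rank \alpha = 0$. Pick a system of parameters $x_1,\dots,x_d$ of $R$ with Koszul complex ${\Bbb K}.$; since $R$ is a domain, the associativity formula for multiplicities gives $\chi_{{\Bbb K}.}(\beta) = (\rank \beta)\,e(x_1,\dots,x_d;R)$ for all $\beta \in G_0(R)$, where $e(x_1,\dots,x_d;R) > 0$. Applying this to $\alpha$ and using $\chi_{{\Bbb K}.}(\alpha) = 0$ yields $\rank \alpha = 0$. (Alternatively, $\tau_d(\alpha) = (\rank \alpha)[\spec R]$ is numerically equivalent to $0$, while $[\spec R]$ is not, again by pairing with such a Koszul complex.) Now formula (\ref{tau(d-1)}) and additivity give $\tau_{d-1}(\alpha) = c_1(\alpha) - \tfrac{\rank \alpha}{2}K_R = c_1(\alpha)$ in $A_{d-1}(R)_{\Bbb Q}$. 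On the other hand, $\tau(\alpha)$ is numerically equivalent to $0$ because $\tau$ preserves numerical equivalence, and by compatibility with the grading its dimension $d-1$ component $\tau_{d-1}(\alpha)$ is numerically equivalent to $0$ too. Hence $c_1(\alpha)$ is numerically equivalent to $0$, as desired, so $c_1$ factors through $\overline{G_0(R)}$ and the induced map into $\overline{A_{d-1}(R)}$ is the required $\overline{c_1}$.

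I do not expect a genuine obstacle here: the real content is already packaged in the cited compatibility of $\tau$ with numerical equivalence \cite{K23} and in the formula (\ref{tau(d-1)}) from \cite{K24}. The only point that needs its own short argument is the vanishing $\rank \alpha = 0$, which is used solely to kill the term $\tfrac{\rank \alpha}{2}K_R$ --- observe that $K_R$ itself is in general \emph{not} numerically trivial --- and this is precisely the statement that the rank homomorphism descends to $\overline{G_0(R)}$, which follows at once from testing against Koszul complexes of systems of parameters as above.
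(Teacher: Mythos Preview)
Your argument is correct and essentially identical to the paper's: both use the Koszul complex of a system of parameters to show that the rank map descends to $\overline{G_0(R)}$, then combine the well-definedness of $\tau_{d-1}$ on $\overline{G_0(R)}_{\Bbb Q}$ with the identity $c_1 = \tau_{d-1} + \tfrac{K_R}{2}\,\mathrm{rk}$ from (\ref{tau(d-1)}), and finally pass from ${\Bbb Q}$-coefficients to ${\Bbb Z}$-coefficients via the torsion-freeness of $\overline{A_{d-1}(R)}$. The only cosmetic difference is that the paper writes down $\overline{c_1}$ directly as the sum $\tau_{d-1} + \tfrac{K_R}{2}\,\mathrm{rk}_{\Bbb Q}$ on $\overline{G_0(R)}_{\Bbb Q}$, whereas you phrase the same computation as checking that $c_1$ kills numerically trivial classes.
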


\begin{proof}
First we prove that the map \begin{equation}\label{rank}
{\rm rk} : \overline{G_0(R)} \longrightarrow {\Bbb Z} \end{equation}
taking the rank of a module is well-defined.
Let ${\Bbb K}.$ be the Koszul complex with respect to a parameter ideal $I$.
By the definition of numerical equivalence,
the map \[
\chi_{{\Bbb K}.} : G_0(R) \longrightarrow {\Bbb Z},
\]
taking the alternating sum of the length of homologies
of the complex ${\Bbb K}.$ tensored with
a given $R$-module, induces the map
\[
\chi_{{\Bbb K}.} : \overline{G_0(R)} \longrightarrow {\Bbb Z} .
\]
Then, for an $R$-module $M$,
\[
\chi_{{\Bbb K}.}([M]) = e(I, M) = e(I, R) \cdot \rank M ,
\]
where $e(I, - )$ denotes the Hilbert-Samuel multiplicity with respect to $I$.
Therefore, the map
${\rm rk} : \overline{G_0(R)} \longrightarrow {\Bbb Z}$
taking the rank of a module is well-defined.

Recall that we have a well-defined map
\[
\tau_{d-1} : \overline{G_0(R)}_{\Bbb Q} \longrightarrow \overline{A_{d-1}(R)}_{\Bbb Q}
\]
satisfying (\ref{tau(d-1)}).
Therefore, the map
\[
\overline{c_1} : \overline{G_0(R)}_{\Bbb Q} \longrightarrow
\overline{A_{d-1}(R)}_{\Bbb Q}
\]
which takes the first Chern class
is equal to $\tau_{d-1} + \frac{K_R}{2}{\rm rk}_\QQ$.

Since $\overline{A_{d-1}(R)}$ is torsion-free,
we obtain the well-defined map
\[
\overline{c_1} : \overline{G_0(R)} \longrightarrow
\overline{A_{d-1}(R)}
\]
that makes the diagram~(\ref{d1}) commutative.
\end{proof}

\section{On the boundary of the Cohen Macaulay cone}\label{conesec}
\setcounter{equation}{0}

In this section we prove the main technical results about $C_{CM}(R)$, Theorem \ref{2.1}.

We denote by $\overline{F_{d-1}G_0(R)}$
the kernel of the map  ${\rm rk}$ in (\ref{rank}).
It is easy to check that $\overline{F_{d-1}G_0(R)}$
is generated by cycles $[M]$ with $\dim M < d$.
By the map ${\rm rk}$, we have the decomposition
\begin{equation}\label{decomp}
\overline{G_0(R)} =
\overline{F_{d-1}G_0(R)} \oplus {\Bbb Z}[R] .
\end{equation}

By tensoring  (\ref{decomp}) with the real number field ${\Bbb R}$, we have
\begin{equation}\label{decompR}
\overline{G_0(R)}_{\Bbb R} =
\overline{F_{d-1}G_0(R)}_{\Bbb R}
\oplus {\Bbb R}[R] .
\end{equation}

Let $C_{CM}(R)$ be the Cohen-Macaulay cone defined in Definition~2.4 in \cite{CK}, that is,
\[
C_{CM}(R) = \sum_{M: {\rm MCM}} {\Bbb R}_{\ge 0}[M] \subset \overline{G_0(R)}_{\Bbb R} .
\].

The following result is a main technical result of our paper and plays an important role in the proof of Theorem~\ref{1.2}.

\begin{Theorem}\label{2.1}
Let $(R, {\frak m})$ be a Cohen-Macaulay local domain satisfying Assumption~\ref{assume}.

Then, we have
\[
C_{CM}(R)^- \cap \overline{F_{d-1}G_0(R)}_{\Bbb R} = \{ 0 \} ,
\]
where $C_{CM}(R)^-$ denotes the closure of the Cohen-Macaulay cone
$C_{CM}(R)$ in $\overline{G_0(R)}_{\Bbb R}$
with respect to the classical topology.
\end{Theorem}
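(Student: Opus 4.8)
The plan is to show that any maximal Cohen-Macaulay module $M$ of rank $r > 0$ has a class $[M]$ that stays ``far from'' the hyperplane $\overline{F_{d-1}G_0(R)}_{\Bbb R}$, in a way that survives taking limits. The natural tool is a linear functional on $\overline{G_0(R)}_{\Bbb R}$ that is positive on $C_{CM}(R)$ and whose restriction to the $[R]$-component of $M$ is controlled from below by the rank. Concretely, I would use the Euler characteristic pairing $\chi_{{\Bbb F}.}$ attached to a well-chosen perfect complex with finite-length homology: for a maximal Cohen-Macaulay module $M$ and a perfect complex ${\Bbb F}.$ arising as (a truncation or twist of) a resolution coming from de Jong's alteration $Z \to \Spec R$, one expects $\chi_{{\Bbb F}.}([M]) \geq 0$ always, with a strict lower bound $\chi_{{\Bbb F}.}([M]) \geq c \cdot \rank M$ for some positive constant $c$ independent of $M$. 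Writing $[M] = \beta_M + (\rank M)[R]$ under the decomposition (\ref{decompR}) with $\beta_M \in \overline{F_{d-1}G_0(R)}_{\Bbb R}$, the point is that if $x \in C_{CM}(R)^-$ lies in $\overline{F_{d-1}G_0(R)}_{\Bbb R}$, then $x$ is a limit of nonnegative combinations $\sum a_i [M_i]$ whose $[R]$-components $\sum a_i \rank M_i$ tend to $0$; combined with the bound $\chi_{{\Bbb F}.}(\sum a_i[M_i]) \geq c \sum a_i \rank M_i \geq 0$ and continuity of $\chi_{{\Bbb F}.}$, we would get $\chi_{{\Bbb F}.}(x) = 0$. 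Iterating this for enough complexes ${\Bbb F}.$ (or using positivity on a spanning set) forces $x = 0$.

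The first concrete step is to produce the alteration: by Assumption~\ref{assume}, since $R$ is a domain, there is a proper generically finite morphism $\pi : Z \to \Spec R$ with $Z$ regular. Pushing forward structure sheaves (or more precisely, using $R\pi_*$ on perfect complexes on $Z$ and the projection formula) gives a family of perfect $R$-complexes, and the generic finiteness of degree $n := [\kappa(Z) : \kappa(\Spec R)]$ is what injects the factor of $\rank M$ into the Euler characteristic. The key inequality I expect is: for a maximal Cohen-Macaulay $R$-module $M$, the pullback $\pi^* M$ (or its torsion-free quotient) is torsion-free of rank $r$ on the regular scheme $Z$, hence has nonnegative Euler characteristics against effective data, and a Riemann-Roch / local-intersection computation on $Z$ yields $\chi_{{\Bbb F}.}([M]) \geq c \cdot r$ with $c$ depending only on $\pi$ and the chosen complex, not on $M$. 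This is morally the statement that on a regular scheme, rank-$r$ torsion-free sheaves have ``bounded below'' intersection numbers — one uses that locally free resolutions exist and the leading term of the relevant multiplicity is $r$ times a positive constant.

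The second step is the closure/limit argument, which is soft once the uniform bound is in hand: choose finitely many perfect complexes ${\Bbb F}.^{(1)}, \dots, {\Bbb F}.^{(m)}$ so that the functionals $\chi_{{\Bbb F}.^{(j)}}$ separate points of $\overline{G_0(R)}_{\Bbb R}$ (possible since numerical equivalence is by definition the common kernel of all such functionals, and $\overline{G_0(R)}$ is finitely generated by the cited results of \cite{K23}). Each $\chi_{{\Bbb F}.^{(j)}}$ is nonnegative on $C_{CM}(R)$, hence on $C_{CM}(R)^-$ by continuity. For $x \in C_{CM}(R)^- \cap \overline{F_{d-1}G_0(R)}_{\Bbb R}$, write $x = \lim_k \sum_i a_{ik}[M_{ik}]$; applying $\chi_{{\Bbb F}.^{(j)}}$ and the uniform bound $\chi_{{\Bbb F}.^{(j)}}([M_{ik}]) \geq c_j \rank M_{ik} \geq 0$ shows $\chi_{{\Bbb F}.^{(j)}}(x) \geq c_j \cdot \lim_k \rk(\sum_i a_{ik}[M_{ik}]) = c_j \cdot \rk(x) = 0$ (using that $\rk$ descends to $\overline{G_0(R)}$, Lemma~\ref{1.3}, and $\rk(x)=0$ since $x \in \overline{F_{d-1}G_0(R)}_{\Bbb R}$) — but also $\chi_{{\Bbb F}.^{(j)}}(x) \geq 0$ with the reverse needing the lower bound to pinch it. Actually the cleanest version: since each $\chi_{{\Bbb F}.^{(j)}}(x) \geq 0$ and $\rk(x) = 0$, and we can arrange one particular complex (e.g. a Koszul complex) with $\chi([M]) = e(I,R)\rank M$ so that $\chi(x) = 0$ exactly, the pinching forces every $a_{ik}\rank M_{ik} \to 0$ contribution, hence $\chi_{{\Bbb F}.^{(j)}}(x) = 0$ for all $j$, hence $x = 0$ by separation.

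The main obstacle is the uniform lower bound $\chi_{{\Bbb F}.}([M]) \geq c \cdot \rank M$ with $c$ independent of $M$ — this is the crux and is where the regularity of $Z$ (so that pushforward of perfect complexes behaves well and intersection numbers of torsion-free sheaves are controlled by rank) must be used essentially. A secondary technical point is checking that the chosen perfect complexes on $R$ genuinely come from, or can be compared with, resolutions of finite-length finite-projective-dimension modules (needed to invoke the last paragraph before Assumption~\ref{assume}, identifying numerical equivalence via such resolutions in the Cohen-Macaulay case), and that the construction via $R\pi_*$ lands among these. I would handle the uniform bound by reducing to the behavior of the top-dimensional and codimension-one parts of $\tau([M]) = \tau_d([M]) + \tau_{d-1}([M]) + \cdots$, using the formula (\ref{tau(d-1)}) and the fact that for $M$ maximal Cohen-Macaulay the lower $\tau_i$ are constrained, so that the positivity of the Euler characteristic (guaranteed abstractly by the homological vanishing theorem for MCM modules over a complex with finite-length homology) combines with a rank-proportional leading term.
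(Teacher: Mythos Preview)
Your argument has a genuine gap at the pinching step. From nonnegativity of $\chi_{{\Bbb F}.}$ on MCM modules you correctly get $\chi_{{\Bbb F}.}(x) \geq 0$ for $x \in C_{CM}(R)^-$, and from $\rk(x)=0$ you get $\sum_i a_{ik}\rank M_{ik} \to 0$. But the inference ``hence $\chi_{{\Bbb F}.^{(j)}}(x)=0$ for all $j$'' does not follow: to pinch from above you would need an \emph{upper} bound $\chi_{{\Bbb F}.}([M]) \leq C_{{\Bbb F}.}\cdot \rank M$ uniform over MCM $M$, not the lower bound $\chi_{{\Bbb F}.}([M]) \geq c\cdot \rank M$ you focus on. The lower bound is either trivial (take $c=0$) or, for the Koszul complex, an equality $\chi_{{\Bbb K}.}([M]) = e(I,R)\rank M$ that singles out exactly one functional and gives no information about the others. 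Worse, the needed upper bound is essentially equivalent to the theorem: a uniform estimate $|\chi_{{\Bbb F}.}([M])| \leq C_{{\Bbb F}.}\rank M$ for every ${\Bbb F}.$ says precisely that the slice $C_{CM}(R)^- \cap \{\rk = 1\}$ is bounded, which is Lemma~\ref{Lemma2.2} and is deduced \emph{from} Theorem~\ref{2.1}, not used to prove it. The alteration $Z \to \Spec R$ will not hand you this upper bound directly, because the ``$\beta_M$-part'' of $[M]$ can in principle be large relative to $\rank M$; that it cannot is exactly what must be shown.

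The paper's proof runs by a completely different mechanism. It takes a nonzero $\alpha$ in the intersection, normalizes so that $[M_n]/\|[M_n]\| \to \alpha$ for MCM modules $M_n$, and then uses the bound $\mu(M_n) \leq e((\underline{x}),R)\cdot \rank M_n$ (valid because $M_n$ is MCM) to produce syzygies $0 \to N_n \to R^{m\,r_n} \to M_n \to 0$ with $N_n$ again MCM. Writing $[M_n] = (\beta_n, r_n[R])$ one gets $[N_n] = (-\beta_n, (m-1)r_n[R])$, and since the $[R]$-component is negligible in the limit (as $\alpha$ has rank zero), $[N_n]/\|[N_n]\| \to -\alpha$. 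Thus both $\alpha$ and $-\alpha$ lie in $C_{CM}(R)^-$, contradicting the strict convexity of the cone established in \cite{CK}, Lemma~2.5(4),(5). The alteration hypothesis enters only through that cited lemma (and through finite generation of $\overline{G_0(R)}$), not through any explicit construction of perfect complexes.
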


\begin{proof}
Let $e_1$, \ldots, $e_s$ be a free basis of $\overline{F_{d-1}G_0(R)}$.
Thinking $[R]$, $e_1$, \ldots, $e_s$ as an  orthonormal basis,
we define a metric on $\overline{G_0(R)}_{\Bbb R}$.
For each vector $v$ in $\overline{G_0(R)}_{\Bbb R}$,
we denote $|| v ||$ the length of the vector $v$.

Assume the contrary.
Let $\alpha$ be a non-zero element in \[
C_{CM}(R)^- \cap \overline{F_{d-1}G_0(R)}_{\Bbb R} .
\]
We may assume that $|| \alpha || = 1$.

By Lemma~2.5 (2) in \cite{CK}, there exists a sequence of
maximal Cohen-Macaulay modules
\[
M_1, M_2, \ldots, M_n, \ldots
\]
such that
\begin{equation}\label{lim}
\lim_{n \rightarrow \infty} \frac{[M_n]}{||[M_n]||} = \alpha
\end{equation}
in $\overline{G_0(R)}_{\Bbb R}$.

Let $x_1$, \ldots, $x_d$ be a system of parameters of $R$.
Since $M_n$ is Cohen-Macaulay,
\begin{equation}\label{sharp}
\ell_R(M_n/{\frak m}M_n) \le \ell_R(M_n/(\underline{x})M_n) =
e((\underline{x}), M_n) = \rank M_n \cdot e((\underline{x}), R) .
\end{equation}
Put $m =e((\underline{x}), R)$ and
$r_n = \rank M_n$ for each $n$.
If $m = 1$, then $R$ is a regular local ring, and therefore,
$\overline{F_{d-1}G_0(R)} = 0$.
Suppose $m \ge 2$. By (\ref{sharp}), we have an exact sequence of the form
\[
0 \longrightarrow N_n \longrightarrow R^{r_nm} \longrightarrow M_n \longrightarrow 0 .
\]
Remark that $N_n$ is a maximal Cohen-Macaulay module.

Set
\[
[M_n] = (\beta_n, r_n[R]) \in \overline{F_{d-1}G_0(R)} \oplus {\Bbb Z}[R] = \overline{G_0(R)}  .
\] Then, we have
\[
[N_n] = r_nm[R] - [M_n]
= (-\beta_n, r_n(m-1)[R]) \in \overline{F_{d-1}G_0(R)} \oplus {\Bbb Z}[R] = \overline{G_0(R)}  .
\] By (\ref{lim}), we have
\begin{eqnarray*}
\lim_{n \rightarrow \infty} \frac{r_n[R]}{||[M_n]||} & = & 0 \\
\lim_{n \rightarrow \infty} \frac{\beta_n}{||[M_n]||} & = & \alpha ,
\end{eqnarray*}
since $\alpha$ is in $\overline{F_{d-1}G_0(R)}_{\Bbb R}$.
Hence, we have
\begin{eqnarray*}
\lim_{n \rightarrow \infty} \frac{r_n(m-1)[R]}{||[M_n]||} & = & 0 \\
\lim_{n \rightarrow \infty} \frac{-\beta_n}{||[M_n]||} & = & -\alpha .
\end{eqnarray*}

On the other hand, it is easy to see
\[
\lim_{n \rightarrow \infty} \frac{||[N_n]||}{||[M_n]||} = 1 .
\]
In fact, since $m \ge 2$,
\[
0 < ||[M_n]|| \le ||[N_n]|| .
\]
Hence,
\[
1 \le \frac{||[N_n]||}{||[M_n]||} \le
\frac{||r_n(m-1)[R]||}{||[M_n]||} + \frac{||\beta_n||}{||[M_n]||}
\longrightarrow ||\alpha|| = 1 .
\]

Then, we have
\begin{eqnarray*}
\lim_{n \rightarrow \infty} \frac{r_n(m-1)[R]}{||[N_n]||} & = & 0 \\
\lim_{n \rightarrow \infty} \frac{-\beta_n}{||[N_n]||} & = & -\alpha .
\end{eqnarray*}
Therefore, \[
\lim_{n \rightarrow \infty} \frac{[N_n]}{||[N_n]||} = - \alpha .
\]
Thus, we have \[
- \alpha \in C_{CM}(R)^- .
\]
It contradicts to Lemma~2.5 (4), (5) in \cite{CK}.
\end{proof}

\begin{Lemma}\label{Lemma2.2}
Let $r$ be a positive integer.
Put
\[
C_{CM}(R)_r^- = \{ \alpha \in  \overline{F_{d-1}G_0(R)}_{\Bbb R} \mid
(\alpha, r[R]) \in C_{CM}(R)^- \} ,
\]
where $C_{CM}(R)^- \subset \overline{G_0(R)}_{\Bbb R} = \overline{F_{d-1}G_0(R)}_{\Bbb R} \oplus {\Bbb R}[R]$.

Then, $C_{CM}(R)_r^-$ is a compact subset of  $\overline{F_{d-1}G_0(R)}_{\Bbb R}$.
\end{Lemma}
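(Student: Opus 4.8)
The plan is to show that $C_{CM}(R)_r^-$ is both closed and bounded in the finite-dimensional real vector space $\overline{F_{d-1}G_0(R)}_{\Bbb R}$, from which compactness follows. Closedness is the easy part: $C_{CM}(R)_r^-$ is the preimage of the closed set $C_{CM}(R)^-$ under the continuous affine map $\overline{F_{d-1}G_0(R)}_{\Bbb R} \to \overline{G_0(R)}_{\Bbb R}$ sending $\alpha \mapsto (\alpha, r[R])$, so it is closed; equivalently it is the intersection of $C_{CM}(R)^-$ with the affine slice $\{\text{rk} = r\}$, pushed down to the first factor. The substance of the lemma is boundedness, and this is where Theorem \ref{2.1} must be invoked.

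For boundedness I would argue by contradiction. Suppose there is a sequence $\alpha_n \in C_{CM}(R)_r^-$ with $||\alpha_n|| \to \infty$. Then the vectors $v_n := (\alpha_n, r[R]) \in C_{CM}(R)^-$ satisfy $||v_n|| \to \infty$ as well (since the $[R]$-component is the fixed vector $r[R]$), and after passing to a subsequence the unit vectors $v_n/||v_n||$ converge to some $\beta$ with $||\beta|| = 1$. Because $C_{CM}(R)^-$ is a closed cone, $\beta \in C_{CM}(R)^-$. On the other hand, writing $v_n/||v_n|| = (\alpha_n/||v_n||,\ (r/||v_n||)[R])$ and noting $||v_n|| \ge ||\alpha_n|| \to \infty$, the $[R]$-component $(r/||v_n||)[R] \to 0$, so $\beta \in \overline{F_{d-1}G_0(R)}_{\Bbb R}$. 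Thus $\beta$ is a nonzero element of $C_{CM}(R)^- \cap \overline{F_{d-1}G_0(R)}_{\Bbb R}$, contradicting Theorem \ref{2.1}. Hence $C_{CM}(R)_r^-$ is bounded, and being closed and bounded in a finite-dimensional space it is compact.

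The main obstacle, if any, is a minor one: one must make sure the reduction to a convergent subsequence of $v_n/||v_n||$ really does land in $C_{CM}(R)^-$ and that the limit picks up the vanishing of the $[R]$-coordinate correctly — i.e. that $||v_n|| \to \infty$ genuinely follows from $||\alpha_n|| \to \infty$, which is immediate from the orthogonal decomposition and $||v_n||^2 = ||\alpha_n||^2 + r^2\,||[R]||^2$. Everything else is formal point-set topology once Theorem \ref{2.1} is in hand; in particular no further input about maximal Cohen-Macaulay modules is needed beyond what Theorem \ref{2.1} already encodes.
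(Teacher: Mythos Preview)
Your proof is correct and follows the same strategy as the paper's: closedness is immediate from the definition, and boundedness is argued by contradiction, normalizing an unbounded sequence to produce a nonzero limit in $C_{CM}(R)^- \cap \overline{F_{d-1}G_0(R)}_{\Bbb R}$ and then invoking Theorem~\ref{2.1}. The only cosmetic difference is that the paper phrases the unbounded sequence in terms of actual maximal Cohen-Macaulay modules $G_n$ of varying rank $s_n$ rather than abstract points of the closed cone, but the underlying argument is identical and your direct version is slightly cleaner.
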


\begin{proof}
Tensoring (\ref{rank}) with ${\Bbb R}$, we obtain the map
\[
{\rm rk}_{\Bbb R} : \overline{G_0(R)}_{\Bbb R} \longrightarrow {\Bbb R} .
\]
Then, since
\[
C_{CM}(R)_r^- \simeq C_{CM}(R)^- \cap ({\rm rk}_{\Bbb R})^{-1}(r) ,
\]
$C_{CM}(R)_r^-$ is a closed subset of $ \overline{F_{d-1}G_0(R)}_{\Bbb R} \simeq ({\rm rk}_{\Bbb R})^{-1}(r)$.
(Note that this identification is given by $\alpha \mapsto \alpha  + r[R]$.)

Suppose that $C_{CM}(R)_r^-$ is not bounded.
Then, there exists a sequence of maximal Cohen-Macaulay modules
\[
G_1, \ G_2, \ G_3, \ \ldots
\]
such that, if
\[
G_n = (\alpha_n, s_n[R]) \in \overline{F_{d-1}G_0(R)}_{\Bbb R} \oplus {\Bbb R}[R]
= \overline{G_0(R)}_{\Bbb R},
\]
we have
\begin{equation}\label{limits}
\lim_{n \rightarrow \infty}\frac{r}{s_n} ||\alpha_n|| = \infty .
\end{equation}

Put
\[
S = \{ v \in \overline{F_{d-1}G_0(R)}_{\Bbb R} \mid
||v|| = 1 \} .
\]
Then \[
\frac{\alpha_n}{||\alpha_n||} \in S
\]
if $\alpha_n \neq 0$.
Since $S$ is compact,
$\left\{  \alpha_n/||\alpha_n|| \right\}_n$ contains a subsequence
that converges to a point of $S$, say $\beta$.
Taking a subsequence, we may assume
\[
\lim_{n \rightarrow \infty}\frac{\alpha_n}{||\alpha_n||} = \beta .
\]

Then, 
\[
\lim_{n \rightarrow \infty}\frac{[G_n]}{||[G_n]||} = \beta .
\]
In fact, by (\ref{limits}), we know
\[
\lim_{n \rightarrow \infty}\frac{[R]}{\frac{1}{s_n}||[G_n]||} = 0,
\]
since $0 \le ||\alpha_n||/s_n \le ||[G_n]||/s_n$.
Further, \[
1 = \frac{||[G_n]||}{ ||[G_n]||} \le
\frac{||\alpha_n||}{ ||[G_n]||} +  \frac{||s_n[R]||}{||[G_n]||}
\le 1 + \frac{||[R]||}{\frac{1}{s_n}||[G_n]||} \longrightarrow  1 \ (n \rightarrow \infty) .
\]
Hence \[
\lim_{n \rightarrow \infty}\frac{||\alpha_n||}{||[G_n]||} = 1 .
\]
Therefore,
\[
\lim_{n \rightarrow \infty}\frac{[G_n]}{||[G_n]||} = \lim_{n \rightarrow \infty}\frac{\alpha_n}{||[G_n]||} + \lim_{n \rightarrow \infty}\frac{[R]}{\frac{1}{s_n}||[G_n]||}
= \lim_{n \rightarrow \infty}\frac{\alpha_n}{||\alpha_n||} = \beta
\in S  \subset \overline{F_{d-1}G_0(R)}_{\Bbb R}.
\]

Thus, we have
\[
0 \neq \beta \in C_{CM}(R)^- \cap \overline{F_{d-1}G_0(R)}_{\Bbb R} .
\]
It contradicts  Theorem~\ref{2.1}.
\end{proof}

A  key consequence  is the following.
\begin{Theorem}\label{1.2}
Assume that $R$ is a Cohen-Macaulay local domain
that satisfies Assumption~\ref{assume}.

Then, for any positive integer $r$,
\[
\{ [M] \in \overline{G_0(R)} \mid \mbox{$M$ is a maximal Cohen-Macaulay module of
rank $r$ } \}
\]
is a finite subset of $\overline{G_0(R)}$.
\end{Theorem}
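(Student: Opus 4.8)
The plan is to deduce Theorem~\ref{1.2} from Lemma~\ref{Lemma2.2} together with the fact, established in Section~\ref{sec2}, that $\overline{G_0(R)}$ is a finitely generated free abelian group. The underlying principle is simple: a bounded subset of a lattice is finite. So the main task is to see that all classes $[M]$, with $M$ a maximal Cohen-Macaulay module of rank $r$, lie in a bounded region of $\overline{G_0(R)}_{\Bbb R}$.

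Concretely, I would argue as follows. Fix the positive integer $r$. Every maximal Cohen-Macaulay module $M$ of rank $r$ satisfies $[M] \in C_{CM}(R) \subset C_{CM}(R)^-$ by definition of the Cohen-Macaulay cone, and writing $[M] = (\beta_M, r[R]) \in \overline{F_{d-1}G_0(R)}_{\Bbb R} \oplus {\Bbb R}[R]$ we see that $\mathrm{rk}([M]) = r$, hence $\beta_M \in C_{CM}(R)_r^-$ in the notation of Lemma~\ref{Lemma2.2}. By Lemma~\ref{Lemma2.2}, $C_{CM}(R)_r^-$ is a compact (in particular bounded) subset of $\overline{F_{d-1}G_0(R)}_{\Bbb R}$, so there is a real constant $B$ with $||\beta_M|| \le B$ for all such $M$. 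Therefore $||[M]|| \le ||\beta_M|| + ||r[R]|| \le B + r\,||[R]||$ for every maximal Cohen-Macaulay module $M$ of rank $r$; that is, the set $\{[M] : M \text{ MCM of rank } r\}$ is contained in the closed ball of radius $B + r\,||[R]||$ in $\overline{G_0(R)}_{\Bbb R}$.

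Finally, $\overline{G_0(R)}$ is a finitely generated free abelian group (by Theorem~3.1 and Remark~3.5 in \cite{K23}, since $R$ satisfies Assumption~\ref{assume}), and with respect to the chosen basis it sits inside $\overline{G_0(R)}_{\Bbb R}$ as a full lattice. A bounded subset of a lattice in a finite-dimensional real vector space is finite, so the set in question is finite, as claimed.

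I do not anticipate a serious obstacle here: the real content has already been extracted into Theorem~\ref{2.1} and Lemma~\ref{Lemma2.2}. The only point requiring a little care is making sure the identification between the affine slice $(\mathrm{rk}_{\Bbb R})^{-1}(r)$ and $\overline{F_{d-1}G_0(R)}_{\Bbb R}$ used in Lemma~\ref{Lemma2.2} is applied correctly to the classes $[M]$, and that boundedness in the slice genuinely gives boundedness of $[M]$ in the ambient space — which it does, since the $[R]$-component is the fixed vector $r[R]$ for all these modules.
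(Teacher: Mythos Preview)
Your proof is correct and follows essentially the same approach as the paper: both arguments use Lemma~\ref{Lemma2.2} to conclude that the classes of rank-$r$ MCM modules lie in a bounded set, and then appeal to the fact that $\overline{G_0(R)}$ is a lattice. The paper phrases this by contradiction (infinitely many distinct lattice points would force $||[L_i]-r[R]||\to\infty$, contradicting compactness), whereas you argue directly, but the content is the same.
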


\begin{proof} Assume the contrary: suppose that there exist infinitely many maximal Cohen-Macaulay modules
\[
L_1, L_2, \ldots, L_n, \ldots
\]
such that
\begin{itemize}
\item
$\rank L_n = r$ for all $n > 0$, and
\item
$[L_i] \neq [L_j]$ in $\overline{G_0(R)}$ if $i \neq j$.
\end{itemize}
Here, note \[
[L_i] - r[R] \in  \overline{F_{d-1}G_0(R)} \simeq {\Bbb Z}^s
\]
(for some $s$) for each $i$, and
\[
[L_i] - r[R] \neq [L_j] - r[R]
\]
if $i \neq j$.
Therefore, we have
\[
\lim_{i \rightarrow \infty} ||[L_i] - r[R]|| = \infty.
\]
Now, one can evoke Lemma \ref{Lemma2.2} to finish the proof. 

\end{proof}

The following corollary immediately follows from Theorem~\ref{1.2}
and Lemma~\ref{1.3}.

\begin{Corollary}\label{1.4}
Assume that $R$ is a $d$-dimensional Cohen-Macaulay local normal domain
that satisfies Assumption~\ref{assume}.

Then, for any positive integer $r$,
\[
\{ \overline{c_1}([M]) \in \overline{A_{d-1}(R)} \mid \mbox{$M$ is a maximal Cohen-Macaulay module of
rank $r$ } \}
\]
is a finite subset of $\overline{A_{d-1}(R)}$.
\end{Corollary}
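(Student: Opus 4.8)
The plan is to deduce Corollary~\ref{1.4} as a direct consequence of Theorem~\ref{1.2} together with Lemma~\ref{1.3}, so the work is essentially bookkeeping: transporting a finiteness statement across the commutative square in~(\ref{d1}). Concretely, suppose $M$ ranges over all maximal Cohen-Macaulay modules of rank $r$. By Theorem~\ref{1.2}, the set $\{[M] \in \overline{G_0(R)}\}$ is finite. Lemma~\ref{1.3} supplies a well-defined group homomorphism $\overline{c_1}\colon \overline{G_0(R)} \to \overline{A_{d-1}(R)}$ fitting into the commutative diagram~(\ref{d1}), so that $\overline{c_1}(\bar{[M]})$ agrees with the image of $c_1([M])$ under the quotient map $A_{d-1}(R)\to\overline{A_{d-1}(R)}$. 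Thus the set $\{\overline{c_1}([M]) \mid M \text{ MCM of rank } r\}$ is precisely the image under $\overline{c_1}$ of the finite set $\{[M]\}$, hence finite. That is the entire argument.

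First I would fix the positive integer $r$ and introduce the notation $\mathcal{M}_r = \{[M] \in \overline{G_0(R)} \mid M \text{ is MCM of rank } r\}$, noting it is finite by Theorem~\ref{1.2} (which applies since $R$ is a Cohen-Macaulay local domain satisfying Assumption~\ref{assume}; normality is an extra hypothesis here, not an obstruction). Next I would invoke Lemma~\ref{1.3}, which requires exactly that $R$ be a $d$-dimensional Noetherian normal local domain satisfying Assumption~\ref{assume} --- all of which are in the hypotheses of the Corollary --- to obtain the map $\overline{c_1}$ on $\overline{G_0(R)}$. Then I would simply observe that the set in the statement equals $\overline{c_1}(\mathcal{M}_r)$, the image of a finite set under a map, and therefore is finite.

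There is essentially no obstacle: the only mild point to check is that the set appearing in the Corollary's display is genuinely the image of $\mathcal{M}_r$ under $\overline{c_1}$ rather than of the larger (possibly infinite) family of modules themselves --- but this is immediate because $\overline{c_1}$ factors through $\overline{G_0(R)}$, so $\overline{c_1}([M])$ depends only on the numerical class $[M] \in \overline{G_0(R)}$. Hence distinct modules with the same class in $\overline{G_0(R)}$ contribute the same element of $\overline{A_{d-1}(R)}$, and the cardinality of the target set is at most $|\mathcal{M}_r| < \infty$. I would close by remarking that this is why the Corollary is phrased as following "immediately" from the two cited results.

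I should note the proof does not need the full strength of the Cohen-Macaulay cone machinery directly; it only uses it through Theorem~\ref{1.2}, whose proof in turn rests on Theorem~\ref{2.1} and Lemma~\ref{Lemma2.2}. So the logical dependency chain is Theorem~\ref{2.1} $\Rightarrow$ Lemma~\ref{Lemma2.2} $\Rightarrow$ Theorem~\ref{1.2} $\Rightarrow$ Corollary~\ref{1.4}, with Lemma~\ref{1.3} entering only at the last step to provide the map $\overline{c_1}$. The write-up can therefore be kept to a few lines.
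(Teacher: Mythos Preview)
Your proposal is correct and matches the paper's own argument exactly: the paper states that the corollary ``immediately follows from Theorem~\ref{1.2} and Lemma~\ref{1.3}'' with no further elaboration, which is precisely the image-of-a-finite-set-under-$\overline{c_1}$ reasoning you spell out.
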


\section{On finiteness of Cohen-Macaulay ideals}\label{sechoch}
\setcounter{equation}{0}

{In this section we discuss  applications of our main technical results in Section \ref{conesec} on the following}:

\begin{Question}
Let $R$ be a local normal domain. If the divisor class group $\Cl(R)$  is finitely generated, then does it contain only finitely many maximal Cohen-Macaulay modules of rank one up to isomorphisms?
\end{Question}

Although this question seems to be  well-known among certain experts,  it is not clear who made it (it was attributed to Hochster in \cite{Kh}). As stated, it needs some adjustments. Here we give a counter example in Example~\ref{elliptic} in dimension two. In this situation, since any reflexive module is a maximal Cohen-Macaulay module, the question merely states that the class group is finitely generated if and only if it is finite. 

\begin{Example}\label{elliptic}
\begin{rm}
Let $E$ be an elliptic curve over $\QQ$ of  rank positive. Then it follows from the Mordell-Weil theorem (see \cite[Theorem 1.9]{CT} for details) that the Picard group of $E$ is finitely generated. But since the group of $\QQ$-rational points in $E$ has  rank positive, the rank of  $\Pic(E)$ ($= \Cl(E)$) is at least $2$. Now take the cone over $E$ and let $R$ be the local ring at the vertex. Then $\Cl(R) =\Cl(E)/\ZZ H $ is finitely generated with positive rank.  By adjoining new variables to $R$ one can get examples in all higher dimensions. 
\end{rm}
\end{Example}

It is natural to speculate the following:

\begin{Conjecture}\label{finCM}
Let $R$ be a local normal domain over an algebraically closed field. If $\Cl(R)$  is finitely generated, then  it contains only finitely many maximal Cohen-Macaulay modules of rank one up to isomorphisms.
\end{Conjecture}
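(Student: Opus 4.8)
\textbf{Proof proposal for Conjecture~\ref{finCM}.}

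The plan is to reduce the statement to the finiteness result of Theorem~\ref{1.2} (equivalently Corollary~\ref{1.4}) together with a bound on how many isomorphism classes of rank-one MCM modules sit over a fixed numerical class. First I would observe that rank-one MCM modules over a normal domain $R$ are exactly the reflexive ideals (up to isomorphism), so the set of their isomorphism classes is a subset of $\Cl(R)$; hence the hypothesis ``$\Cl(R)$ finitely generated'' is the natural one and the question is whether the \emph{MCM locus} inside $\Cl(R)$ is finite. Using the determinant map $c_1$ and the fact that for a reflexive ideal $I$ one has $c_1([R/I]) = [\spec R/I]$, the class $[I] \in \Cl(R)$ is recovered (up to sign) from $c_1([I]) \in A_{d-1}(R)$; so it suffices to bound the fibers of the composite $\Cl(R)_{\mathrm{MCM}} \hookrightarrow G_0(R) \xrightarrow{c_1} A_{d-1}(R) \to \overline{A_{d-1}(R)}$ and then invoke Corollary~\ref{1.4}.

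The two steps that need real work are: (a) controlling the kernel of $A_{d-1}(R) \to \overline{A_{d-1}(R)}$ restricted to the classes coming from MCM ideals, and (b) passing from $\overline{A_{d-1}(R)}$-finiteness to $\Cl(R)$-finiteness. For (a), note that $c_1$ on rank-one modules factors through $\Cl(R) \cong A_{d-1}(R)$ when $R$ is normal (reflexive ideals modulo principal ones), so the kernel in question is precisely the kernel of $\Cl(R) \to \overline{A_{d-1}(R)}$; this is exactly the subtle map studied in Section~\ref{4.1}, and in general it need not be finite. This is where I expect the algebraically-closed-field hypothesis and the finite generation of $\Cl(R)$ to be essential: over $\overline{k}$ one can hope that the numerically-trivial part of $\Cl(R)$, being finitely generated by assumption and ``divisible-like'' (an abelian variety quotient in the cone case), is forced to be finite, or at least that its intersection with the MCM locus is finite. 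For (b), once we know only finitely many numerical classes of rank-one MCM modules occur, we must show each such numerical class contains only finitely many honest isomorphism classes of MCM reflexive ideals; this is again governed by the numerically-trivial subgroup of $\Cl(R)$ and its interaction with the MCM property.

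The main obstacle, and the reason this is stated as a conjecture rather than a theorem, is precisely step (a)/(b): there is no general reason that $\ker\big(A_{d-1}(R) \to \overline{A_{d-1}(R)}\big)$ is finite — over non-closed fields Example~\ref{elliptic} shows it can be infinite — and even over $\overline{k}$ this rests on deep conjectures in the theory of algebraic cycles (cf.\ the Roberts--Srinivas discussion and Proposition~\ref{1.6}, where it is only verified for most isolated hypersurface $3$-folds). So a complete proof in the stated generality seems out of reach with current technology; what one can reasonably do is: prove the implication \emph{``$\ker(\Cl(R)\to\overline{A_{d-1}(R)})$ finite'' $\Longrightarrow$ Conjecture~\ref{finCM} for $R$}, which follows formally from Corollary~\ref{1.4} by the fiber-counting argument above, and then verify the hypothesis in the cases where $\Cl(R)_{\mathrm{num}} $ is understood (low dimension, the hypersurfaces $\xi\eta - f$ of Section~\ref{CMcones}, isolated hypersurface $3$-folds via Proposition~\ref{1.6}). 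I would present the conditional statement as the main positive result and flag the unconditional conjecture as open.
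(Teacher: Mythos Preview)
The statement you are addressing is labeled a \emph{Conjecture} in the paper, and the paper does not claim to prove it; there is no ``paper's own proof'' to compare against. Your proposal correctly recognizes this: you identify the natural conditional route (finiteness of $\ker\bigl(A_{d-1}(R)\to\overline{A_{d-1}(R)}\bigr)$ plus Corollary~\ref{1.4}), which is precisely the content of the paper's Corollary~\ref{1.5}, and you flag that the unconditional statement remains open. Your diagnosis of the obstacle --- that controlling the numerically-trivial part of $\Cl(R)$ over an algebraically closed field rests on deep cycle-theoretic input (cf.\ Proposition~\ref{1.7} and the Roberts--Srinivas discussion) --- matches the paper's own stance. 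In short, your write-up is not a proof of the conjecture but an accurate account of why it is a conjecture and what partial results are available; that is exactly how the paper treats it.
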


\begin{Conjecture}
Let $R$ be a local normal domain. If $\Cl(R^{sh})$ (the class group of strict henselization) is finitely generated , then
there exist only finitely many maximal Cohen-Macaulay modules of rank one up to isomorphisms.
\end{Conjecture}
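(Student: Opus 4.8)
The statement to be proved is the second conjecture, concerning $\Cl(R^{sh})$ being finitely generated. Wait—this is a conjecture, so strictly I am proposing a plan of attack rather than claiming a complete proof. The natural strategy is to reduce the statement over $R$ to a statement over $R^{sh}$ (or a suitable étale/henselian cover), where the class group hypothesis is literally the one appearing in Conjecture~\ref{finCM}, and then invoke the finiteness results of Section~\ref{conesec}.

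\textbf{Step 1: Reduction to the complete/strictly henselian case.} First I would pass from $R$ to $R^{sh}$. A rank-one maximal Cohen-Macaulay $R$-module $M$ is reflexive, hence determined by its class in $\Cl(R)$, and base change $M \mapsto M \otimes_R R^{sh}$ induces a map $\Cl(R) \to \Cl(R^{sh})$ which is injective (faithfully flat descent for reflexive modules). So it suffices to bound the number of isomorphism classes of rank-one MCM modules over $R^{sh}$, since distinct classes upstairs pull back from at most a controlled set downstairs—actually one must be a little careful: the fibers of $\Cl(R) \to \Cl(R^{sh})$ can be infinite a priori, but the kernel is generated by classes that become trivial after an étale cover, and one shows these form a finite group when $\Cl(R^{sh})$ is finitely generated, using that $R \to R^{sh}$ is a filtered colimit of étale extensions and a Noetherian/finite-generation argument on the relevant étale cohomology or Picard groups. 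This is the step I expect to cost the most technical effort.

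\textbf{Step 2: Apply the Cohen-Macaulay cone finiteness.} Once we are over $R^{sh}$ (which again satisfies Assumption~\ref{assume}, being essentially of the required type—this needs a short justification, e.g. that strict henselization preserves the relevant excellence/finite-type hypotheses, or one passes instead to the completion $\widehat{R}$), Theorem~\ref{1.2} with $r=1$ says the set of classes $[M] \in \overline{G_0(R^{sh})}$ of rank-one MCM modules is finite. It remains to upgrade ``finitely many numerical classes'' to ``finitely many isomorphism classes.'' For rank one, a reflexive module $M$ with $[M]=[M']$ in $\overline{G_0}$ has, in particular, $\overline{c_1}([M]) = \overline{c_1}([M'])$ in $\overline{A_{d-1}(R^{sh})}$ by Lemma~\ref{1.3}; if the map $\Cl(R^{sh}) = A_{d-1}(R^{sh}) \to \overline{A_{d-1}(R^{sh})}$ has finite kernel (the theme of Section~\ref{4.1}), we are done. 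So the conjecture is really equivalent, modulo Step 1, to finiteness of that kernel together with Theorem~\ref{1.2}.

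\textbf{Step 3: The kernel of $\Cl \to \overline{\Cl}$.} This is the genuine obstacle and the reason the statement is still a conjecture. One would like to argue: if $[I] \in \Cl(R^{sh})$ is numerically trivial and $\Cl(R^{sh})$ is finitely generated, then $[I]$ is torsion, and among torsion reflexive ideals only finitely many are MCM—but in general numerical triviality of a divisor class need not force torsion (the cone over an abelian variety shows the subtlety, cf. Example~\ref{elliptic} and the discussion in Section~\ref{4.1}). The hoped-for input is that when $\Cl(R^{sh})$ is finitely generated \emph{and} we are strictly henselian (so that, roughly, the ``continuous'' part of the Picard group has been killed), the numerically trivial classes are exactly the torsion ones; combined with the fact that any reflexive module of bounded rank representing only finitely many numerical classes gives finitely many isomorphism classes once we also know each numerical class contains finitely many isomorphism classes of MCM modules—the latter from a separate argument (e.g. boundedness of Betti numbers of MCM modules of fixed rank via \eqref{sharp}, plus a deformation/openness argument). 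I would organize the write-up so that the conjecture is deduced cleanly from (a) Theorem~\ref{1.2}, (b) finiteness of $\ker(\Cl(R^{sh}) \to \overline{\Cl(R^{sh})})$, and (c) the descent in Step 1, flagging (b) as the crux that is established only in special cases (e.g. the dimension-three hypersurfaces of Proposition~\ref{1.6}).
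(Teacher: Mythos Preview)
The statement is a \emph{conjecture}; the paper offers no proof of it, so there is nothing to compare your argument against. You recognized this yourself, and your outline is in fact exactly the template the paper uses for the special cases it \emph{can} handle: Corollary~\ref{1.5} says precisely that, under Assumption~\ref{assume}, finiteness of rank-one MCM modules follows once the kernel of $A_{d-1}(R)\to\overline{A_{d-1}(R)}$ is finite, and Section~\ref{4.1} is devoted to establishing that kernel finiteness in special cases (Proposition~\ref{1.6} for three-dimensional hypersurface isolated singularities). So your Steps~2--3 simply restate the paper's own conditional mechanism, and you correctly flag Step~3 as the genuine obstruction.

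Two remarks on Step~1. First, your digression about ``fibers of $\Cl(R)\to\Cl(R^{sh})$ can be infinite a priori'' is self-contradictory: you already asserted injectivity of this map (which is correct, by faithfully flat descent for reflexive modules of rank one over a normal domain), and injectivity alone is all you need, since a rank-one MCM $R$-module base-changes to a rank-one MCM $R^{sh}$-module (flatness of $R\to R^{sh}$ with zero-dimensional regular fibers preserves depth), and two such $R$-modules are isomorphic iff their base-changes are. So Step~1 is cheaper than you suggest. Second, the real cost you underplay is verifying that $R^{sh}$ still satisfies Assumption~\ref{assume}; strict henselization need not preserve being essentially of finite type over a base, and the existence of alterations for $R^{sh}$ is not automatic. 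Passing to the completion $\widehat R$ instead does not obviously help either, since the hypothesis is on $\Cl(R^{sh})$, not $\Cl(\widehat R)$. This reduction, together with the kernel finiteness of Step~3, is why the statement remains a conjecture.
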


\begin{Conjecture}
Let $R$ be an {excellent} local normal domain with isolated singularity. If the dimension of $R$ is bigger than or equal to $3$, then
there exist only finitely many maximal Cohen-Macaulay modules of rank one up to isomorphisms.
\end{Conjecture}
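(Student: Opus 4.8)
The statement to prove is: if $R$ is an excellent local normal domain with isolated singularity and $\dim R = d \geq 3$, then there exist only finitely many MCM $R$-modules of rank one up to isomorphism. My plan is to reduce this to Theorem~\ref{1.2} (or rather Corollary~\ref{1.4}) by showing that, in this setting, the isomorphism class of a rank-one MCM module $M$ is controlled by a single numerical invariant, namely $\overline{c_1}([M]) \in \overline{A_{d-1}(R)}$, together with a finiteness statement for the fibers of the map $M \mapsto \overline{c_1}([M])$.

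First I would recall that for a normal domain a rank-one MCM module is the same as a reflexive ideal, so isomorphism classes of rank-one MCM modules are in bijection with a subset of $\Cl(R)$ via the first Chern class $c_1$. Thus the problem splits into two pieces: (i) the image of this set in $\overline{A_{d-1}(R)}$ under the composite $\Cl(R) = A_{d-1}(R) \to \overline{A_{d-1}(R)}$ is finite — this is exactly Corollary~\ref{1.4} with $r = 1$, since $R$ is Cohen-Macaulay (being normal with isolated singularity of dimension $\geq 3$ makes it... well, actually normality plus $(S_2)$ is automatic, but we need genuine Cohen-Macaulayness; for $d=3$ an excellent normal domain with isolated singularity is indeed Cohen-Macaulay by Serre's criterion $(S_2)$ only gives $S_2$, so one should restrict to $d=3$ where $(R_1)+(S_2)$... hmm, $S_2$ is not $S_3$). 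So the cleanest route is to handle $d = 3$, where normal $\Rightarrow$ $(S_2) = $ CM is false in general — instead I would invoke that an isolated singularity in dimension $3$ that is normal need not be CM, so I must either add a CM hypothesis or argue differently. Assuming CM (which the Question's phrasing via MCM modules tacitly wants), Corollary~\ref{1.4} applies directly. The remaining piece is (ii): the kernel of $A_{d-1}(R) \to \overline{A_{d-1}(R)}$ must meet the set of classes of rank-one MCM modules in a finite set.

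For step (ii), the key point is that $R$ has an \emph{isolated} singularity of dimension $\geq 3$. Under this hypothesis I expect the natural map $A_{d-1}(R) = \Cl(R) \to \overline{A_{d-1}(R)}$ to have finite kernel — indeed this is precisely the subject of Section~\ref{4.1} and Proposition~\ref{1.6} referenced in the introduction, where it is shown the map is often an isomorphism for isolated hypersurface singularities of dimension $3$. The general mechanism: for an isolated singularity, a divisor class whose image in $\overline{A_{d-1}(R)}$ vanishes is detected by the localized Chern characters of all finite free complexes with finite-length homology, and one shows that such a class, if it is the class of a \emph{reflexive} (hence MCM, by isolated singularity in dimension $\geq 3$) ideal, must come from a torsion subgroup that is finite. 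Combining (i) and (ii): the set of $\overline{c_1}([M])$ is finite by Corollary~\ref{1.4}, each fiber of $\Cl(R) \to \overline{A_{d-1}(R)}$ restricted to reflexive classes is finite by the isolated-singularity argument, so the set of $c_1([M]) \in \Cl(R)$ is finite, hence finitely many isomorphism classes.

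The main obstacle I anticipate is step (ii): proving that the kernel of $\Cl(R) \to \overline{A_{d-1}(R)}$ is finite — or at least finite when intersected with the reflexive MCM locus — in the generality of an arbitrary excellent normal isolated singularity of dimension $\geq 3$. This is genuinely delicate (the introduction itself calls it "rather subtle" even over cones of smooth projective varieties), and it is plausible the theorem as stated is actually a \emph{conjecture} rather than something provable here; if so, the honest move is to prove it under the additional hypothesis that $R$ is Cohen-Macaulay and that the relevant kernel is finite (verifiable for hypersurfaces via Proposition~\ref{1.6}), leaving the general case open. A secondary, more routine obstacle is confirming that in dimension $\geq 3$ with isolated singularity every reflexive module is MCM: this follows because $\depth_{\frak p} M_{\frak p} = \dim R_{\frak p}$ for non-maximal $\frak p$ (the reflexive module is locally free on the punctured spectrum since the singularity is isolated and $R_{\frak p}$ is regular there... wait, that needs $R_{\frak p}$ regular which is the isolated singularity hypothesis), and at $\frak m$ one uses that a reflexive module over a normal domain has depth $\geq 2$, combined with local cohomology vanishing forced by local freeness on the punctured spectrum when $d \geq 3$.
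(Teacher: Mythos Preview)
The statement you are trying to prove is stated in the paper as a \emph{Conjecture}, not a theorem; the paper offers no proof of it. You correctly sensed this near the end of your proposal. Your proposed strategy---reduce to Corollary~\ref{1.4} for finiteness of the image in $\overline{A_{d-1}(R)}$, then control the kernel of $A_{d-1}(R)\to\overline{A_{d-1}(R)}$---is precisely the route the paper takes toward \emph{partial} results (Corollary~\ref{1.5} and the hypersurface case in Section~\ref{4.1}), but the paper does not claim the kernel is finite in the generality of an arbitrary excellent normal isolated singularity of dimension $\geq 3$. Indeed, Section~\ref{4.1} is devoted to this kernel question and establishes it only for $3$-dimensional isolated hypersurface singularities (Proposition~\ref{1.6}), while Proposition~\ref{1.7} shows the general graded case would follow from the standard conjectures and Bloch--Beilinson.

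So the genuine gap is exactly the one you flagged: step~(ii) is open. A secondary gap you also noticed is that the hypotheses do not force $R$ to be Cohen--Macaulay (normal gives $(S_2)$, isolated singularity gives $(R_{d-1})$, but neither yields $(S_d)$ for $d\geq 3$), so even invoking Corollary~\ref{1.4} requires an extra assumption. The honest conclusion is the one you arrived at: one can prove the statement under the added hypotheses that $R$ is Cohen--Macaulay and the kernel of $A_{d-1}(R)\to\overline{A_{d-1}(R)}$ is finite, and this is exactly the content of Corollary~\ref{1.5}; the unconditional statement remains conjectural.
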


By the commutativity of diagram~(\ref{d1}),
the following corollary easily follows from Corollary~\ref{1.4}.

\begin{Corollary}\label{1.5}
Assume that $R$ is a $d$-dimensional Cohen-Macaulay local normal domain
that satisfies Assumption~\ref{assume}.
Assume that the kernel of the natural map
\begin{equation}\label{star}
A_{d-1}(R) \longrightarrow \overline{A_{d-1}(R)}
\end{equation}
is a finite group.

Then, for any positive integer $r$,
\[
\{ c_1([M]) \in A_{d-1}(R) \mid \mbox{$M$ is a maximal Cohen-Macaulay module of
rank $r$ } \}
\]
is a finite subset of $A_{d-1}(R)$.

In particular, $R$ has only finitely many maximal
Cohen-Macaulay modules of rank one up to isomorphism.
\end{Corollary}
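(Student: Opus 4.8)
The plan is to run the first Chern class $c_1$ against the finiteness already established at the level of $\overline{A_{d-1}(R)}$, using the assumed finiteness of the kernel of the map \eqref{star} to transfer the conclusion back down to $A_{d-1}(R)$. First I would fix a positive integer $r$ and consider the set $\{ c_1([M]) \}$ ranging over maximal Cohen-Macaulay modules $M$ of rank $r$. By Corollary~\ref{1.4}, the image of this set in $\overline{A_{d-1}(R)}$ under the natural surjection $A_{d-1}(R) \to \overline{A_{d-1}(R)}$ is finite; here one uses the commutativity of diagram~\eqref{d1}, which says precisely that the class of $c_1([M])$ in $\overline{A_{d-1}(R)}$ is $\overline{c_1}([M])$, computed from the class $[M] \in \overline{G_0(R)}$. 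Now partition $\{ c_1([M]) \}$ into fibers over its finite image in $\overline{A_{d-1}(R)}$. Each nonempty fiber is contained in a single coset of $\ker\bigl(A_{d-1}(R) \to \overline{A_{d-1}(R)}\bigr)$, which by hypothesis is a finite group; hence each fiber is finite, and a finite union of finite sets is finite. This proves the displayed assertion.

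For the "in particular" clause, I would specialize to $r = 1$. If $M$ is a maximal Cohen-Macaulay $R$-module of rank one over the normal domain $R$, then $M$ is reflexive (a rank-one MCM module over a normal Cohen-Macaulay domain satisfies Serre's $(S_2)$ and is torsion-free, hence reflexive), so $M$ is isomorphic to a reflexive ideal $I$, and its class in $\Cl(R) \cong A_{d-1}(R)$ is $c_1([M]) = -c_1([I]) = [\spec R/I]$ up to sign, by the properties of the determinant map recalled before Lemma~\ref{1.3}. Two reflexive ideals are isomorphic as $R$-modules if and only if they represent the same element of $\Cl(R)$, i.e.\ the same element of $A_{d-1}(R)$ up to the sign convention. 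Since the set of values $c_1([M])$ for rank-one MCM $M$ is finite by what was just proved, there are only finitely many isomorphism classes of such modules.

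The only genuine point requiring care is the passage from finiteness of the image in $\overline{A_{d-1}(R)}$ to finiteness upstairs: this is exactly where the hypothesis that $\ker\bigl(A_{d-1}(R)\to\overline{A_{d-1}(R)}\bigr)$ is finite is indispensable, and without it the statement would fail (the kernel can be, e.g., a positive-rank group as in Example~\ref{elliptic}). I expect this step to be the main—indeed only—obstacle, and it is handled by the elementary coset argument above; everything else is bookkeeping with the maps $c_1$, $\overline{c_1}$, and the identification $\Cl(R)\cong A_{d-1}(R)$. One should also note the two degenerate cases covered trivially: if $R$ is regular then $A_{d-1}(R) = 0$, and in general the argument is uniform in $r$, so no separate treatment is needed.
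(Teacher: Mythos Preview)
Your argument is correct and is exactly the expansion of what the paper intends: it invokes Corollary~\ref{1.4} via the commutativity of diagram~(\ref{d1}) and then uses the finite-kernel hypothesis to pull the finiteness back to $A_{d-1}(R)$, followed by the standard identification of rank-one reflexive modules with divisor classes for the ``in particular'' clause. The paper's own proof is a single sentence referring to Corollary~\ref{1.4} and diagram~(\ref{d1}), so you have simply made explicit the coset/fiber argument and the reflexivity step that the paper leaves to the reader.
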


By Corollary~\ref{1.5}, if the kernel of the map (\ref{star}) is a finite set,
there exist only finitely many maximal Cohen-Macaulay modules of rank one
under a mild condition.
As in the following theorem due to Danilov (Lemma~4 in \cite{Danilov1} and Theorem~1, Corollary~1 in \cite{Danilov2}),
$A_{d-1}(R)$ is finitely generated for most of Cohen-Macaulay local normal domain of dimension at least three.
Note that, if $A_{d-1}(R)$ is finitely generated and if $A_{d-1}(R)_\QQ \rightarrow \overline{A_{d-1}(R)}_\QQ$ is an isomorphism,
then the kernel of (\ref{star}) is a finite set.

\begin{Theorem} (Danilov)
Let $R$ be a equi-characteristic excellent local normal domain with isolated singularity.
Assume that $R$ satisfies one of the following two conditions:
\begin{itemize}
\item[a)]
$R$ is essentially of finite type over a field of characteristic zero.
\item[b)]
There exists a maximal primary ideal $I$ of $R$ such that
the blow-up at $I$ is a regular scheme.
\end{itemize}

If $\depth R \ge 3$, then ${\rm Cl}(R)$ is finitely generated.
\end{Theorem}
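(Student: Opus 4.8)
The plan is to prove this classical theorem of Danilov, so I would follow the strategy from \cite{Danilov1} and \cite{Danilov2}. Let $A = \widehat{R}$ be the completion; by excellence $A$ is again a normal domain with isolated singularity, and there is a natural exact sequence relating $\Cl(R)$ and $\Cl(A)$ whose kernel and cokernel are controlled, so it suffices to treat the complete case. Thus I would first reduce to $R$ complete and then write $X = \Spec R$, $U = X \setminus \{\mathfrak{m}\}$ the punctured spectrum, which is regular since $R$ has isolated singularity. The key input is that $\Cl(R) = \Cl(U) = \Pic(U)$, and $\Pic(U)$ is governed by a local cohomology / formal function exact sequence in which the condition $\depth R \geq 3$ enters decisively.

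The heart of the argument is to produce a projective (or proper, via resolution/blow-up) birational morphism $\pi : Y \to X$ with $Y$ regular — under (a) by Hironaka, under (b) by taking the normalized blow-up at $I$ — and to compare $\Cl(R) = \Pic(U)$ with the Picard group and divisor class group data on $Y$. Restriction to the exceptional fibre $E = \pi^{-1}(\mathfrak{m})$, which is a proper scheme over the residue field, gives a map $\Pic(Y) \to \Pic(E)$. Now $\Pic(E)$ sits in the exact sequence coming from the exponential-type sequence $0 \to \mathcal{O}_E^\ast \to \cdots$, so $\Pic(E)$ is an extension of a finitely generated group (the Néron–Severi part, finitely generated by the theorem of the base) by a divisible group coming from $H^1(E, \mathcal{O}_E)$; the divisible part is killed once we know the relevant $H^1$ vanishes or is controlled. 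The condition $\depth R \geq 3$ is exactly what forces, via the theorem on formal functions and the comparison $H^i(U, \mathcal{O}_U) \cong \varprojlim H^i(E_n, \mathcal{O}_{E_n})$, the vanishing $H^1(E, \mathcal{O}_E) = 0$ (equivalently $H^2_{\mathfrak{m}}(R)$ vanishes), so that $\Pic(E)$ is finitely generated. Then a formal/algebraization argument shows $\Pic(U) \to \varprojlim \Pic(E_n)$ is injective with the inverse limit stabilizing to $\Pic(E)$, hence $\Cl(R)$ is finitely generated.

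More concretely, the steps in order: (1) reduce to $R$ complete; (2) identify $\Cl(R)$ with $\Pic(U)$ and set up the formal neighbourhood $\widehat{Y}$ of $E$ in a regular model $Y$; (3) use the theorem on formal functions together with $\depth R \geq 3$ to get $H^1(\widehat{Y}, \mathcal{O}) = 0$, which makes the system $\{\Pic(E_n)\}$ satisfy Mittag–Leffler and stabilize to $\Pic(E)$; (4) invoke the theorem of the base (for $E$ proper over a field, $\operatorname{NS}(E)$ is finitely generated) to conclude $\Pic(E)$ is finitely generated; (5) compare $\Pic(U)$ with $\varprojlim \Pic(E_n) = \Pic(E)$ via Grothendieck's existence theorem for the regular, hence in particular the locally factorial, situation, getting that $\Pic(U)$ injects into a finitely generated group; (6) assemble: finitely generated subgroup of a finitely generated group is finitely generated.

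The main obstacle is step (3)–(5): making the passage between $\Cl(R) = \Pic(U)$ on the \emph{open punctured spectrum} and Picard groups of the \emph{infinitesimal thickenings} $E_n$ of the exceptional divisor completely rigorous, including showing the relevant comparison map is injective (so that no classes are lost) and that the formal Picard group algebraizes. This is precisely where $\depth R \ge 3$ is used and cannot be weakened — in dimension $2$, or depth $2$, the divisible part of $\Pic(E)$ survives and the class group can be genuinely non-finitely-generated, as Example~\ref{elliptic} shows. I would handle it by citing Danilov's lemmas directly (Lemma~4 in \cite{Danilov1}, Theorem~1 and Corollary~1 in \cite{Danilov2}) rather than reproving the formal-geometry machinery, noting only that hypotheses (a) and (b) are exactly what guarantees the existence of the regular model $Y$ needed to run the argument.
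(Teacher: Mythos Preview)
The paper does not prove this theorem at all: it is stated as a result of Danilov with a citation to Lemma~4 in \cite{Danilov1} and Theorem~1, Corollary~1 in \cite{Danilov2}, and no argument is given. Your proposal goes further than the paper by sketching the actual content of Danilov's proof (reduction to the complete case, regular model $Y$, formal functions and $\depth\ge 3$ forcing $H^1$-vanishing, theorem of the base for $\operatorname{NS}(E)$, algebraization), and your final sentence---that you would cite Danilov's lemmas directly rather than reprove the formal-geometry machinery---is exactly what the paper does. So your approach is consistent with, and strictly more informative than, the paper's treatment.
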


In the case of dimension $2$,
there exist examples of isolated hypersurface singularity
that has infinitely many maximal Cohen-Macaulay modules of rank one
(see Example~\ref{elliptic}).

{Our next corollary confirms the Conjectures considered in this section for most isolated hypersurface singularities of dimension at least $3$. 

\begin{Corollary}\label{1.6}
Let $R$ be a $3$-dimensional isolated hypersurface singularity
with desingularization $f : X \rightarrow \spec R$ such that
$X \setminus f^{-1}({\frak m}) \simeq \spec R \setminus \{ {\frak m} \}$.
Then $R$ has only finitely many maximal
Cohen-Macaulay modules of rank one up to isomorphism.
\end{Corollary}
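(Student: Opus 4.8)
The plan is to derive this from Corollary~\ref{1.5}. First one checks that $R$ satisfies the standing hypotheses there: a hypersurface is Cohen--Macaulay, so $\depth R=\dim R=3$; an isolated singularity of dimension $\ge 2$ is regular in codimension one and, being $S_2$, is a normal local domain; and the desingularization $f$ exhibits Assumption~\ref{assume}. Hence $A_2(R)=\Cl(R)$, and it remains to prove that the kernel of $A_2(R)\to\overline{A_2(R)}$ is finite.

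By the remark recorded just before the statement, this follows from two facts: (i) $\Cl(R)$ is finitely generated, and (ii) the (always surjective) map $A_2(R)_{\QQ}\to\overline{A_2(R)}_{\QQ}$ is injective. For (i) I would invoke Danilov's theorem above: $\depth R=3$, and after replacing $f$ by a projective desingularization $X$ is the blow-up of $\Spec R$ along an $\m$-primary ideal, so condition (b) there holds; therefore $\Cl(R)=A_2(R)$ is finitely generated. Since $\overline{A_2(R)}$ is finitely generated and free, the kernel is then a finitely generated group, so it is finite exactly when it is torsion, that is, exactly when (ii) holds.

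The crux is (ii): a divisor class on $R$ that is numerically equivalent to $0$ must be torsion. I would work on $X$. As $X$ is regular and $f$ restricts to an isomorphism over $\Spec R\setminus\{\m\}$, each Weil divisor on $\Spec R$ extends, uniquely modulo the finitely many exceptional prime divisors $E_1,\dots,E_t$, to a line bundle on $X$, and $\Cl(R)=\Pic(X)/\langle E_1,\dots,E_t\rangle$. Any perfect complex on $\Spec R$ with finite-length homology pulls back under $f$ to a perfect complex supported on the proper fibre $f^{-1}(\m)$, and, via the projection formula for localized Chern characters (in the spirit of the computations of \cite{K23}, \cite{RS}), the image of a divisor class in $\overline{A_2(R)}$ is governed by the intersection numbers $L\cdot C$ of the associated line bundle $L\in\Pic(X)$ with the complete curves $C$ of $X$ --- all of which lie in $f^{-1}(\m)$. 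Thus a numerically trivial divisor class is represented by some $L\in\Pic(X)$ with $L\cdot C=0$ for every complete curve $C\subset X$, and the point to establish is that such an $L$ is, modulo $\langle E_1,\dots,E_t\rangle$, a torsion class. This is where I expect the main difficulty: informally such an $L$ lies in a ``$\Pic^0$'' of the exceptional fibre, and one must leverage the finite generation of $\Cl(R)$ --- the assertion, special to dimension $3$, that no positive-dimensional Picard variety survives the descent to $R$ --- to exclude an abelian-variety contribution. (In dimension $2$ this step genuinely fails, as Example~\ref{elliptic} illustrates.) Granting it, Corollary~\ref{1.5} applies and gives the finiteness of the maximal Cohen--Macaulay modules of rank one.
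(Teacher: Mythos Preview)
Your reduction to Corollary~\ref{1.5} and the identification of the two needed ingredients --- finite generation of $\Cl(R)$ and injectivity of $A_2(R)_{\QQ}\to\overline{A_2(R)}_{\QQ}$ --- is the right framework. However, your argument for (ii) is, by your own admission, incomplete: you describe what must be shown (that a line bundle on $X$ with trivial intersection against every complete curve is torsion modulo the exceptional divisors) but do not establish it, and the informal appeal to a ``$\Pic^0$'' of the fibre does not go through without substantial further input. As written this is a genuine gap, not just a missing detail.

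The paper takes a completely different route and in fact proves the stronger statement that $A_2(R)\to\overline{A_2(R)}$ is an \emph{isomorphism} (Proposition~\ref{1.6} in Section~\ref{4.1}), so your step~(i) via Danilov becomes unnecessary. The argument is algebraic rather than intersection-theoretic on $X$: given a reflexive ideal $I$ with $c_1([I])$ numerically zero, one decomposes $\tau([I])=\sum_i\tau_i([I])$. Since $R$ is a hypersurface (hence complete intersection), $\tau_3([I])=\tau([R])$; the hypothesis gives $\tau_2([I])=c_1([I])\equiv 0$; and $\tau_1,\tau_0$ are always numerically trivial by Proposition~3.7 in \cite{K23}. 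Hence $[I]$ is numerically equivalent to $[R]$ in $G_0(R)$. The decisive step is then Hochster's theta pairing: Corollary~6.3 in \cite{DK1} gives $\theta^R(I,I)=\theta^R(R,I)=0$, and Corollary~7.9 in \cite{DK1} (this is where the resolution $f$ is used) forces $c_1([I])=0$ in $A_2(R)$ outright --- not merely torsion. The isolated-hypersurface-in-dimension-$3$ hypothesis enters precisely here: $\theta^R$ defines a quadratic form on divisor classes, and its vanishing on $[I]$ forces the class itself to vanish.
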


\begin{proof}
This follows immediately from Corollary~\ref{1.5} and Proposition \ref{1.6}.
\end{proof}}

\begin{Example}\label{gradedcase}
\begin{rm}
Suppose that $A$
is a positively graded ring over a field $k$, that is,
$A = \oplus_{n \geq 0}A_n$.
Put $R = A_{A_+}$ and $X = \proj A$.
We assume that $X$ is smooth over $k$.

{Further, assume that $\depth A\geq 3 $,
the characteristic of $k$ is zero and $k$ is algebraically closed.}

In this case, since $H^1(X, {\mathcal O}_X) = 0$, ${\rm Pic}(X)$ is finitely generated.
Therefore, ${\rm Cl}(R)$ is also a finitely generated abelian group.
Then, there exists only finitely many maximal Cohen-Macaulay modules
of rank one up to isomorphism.
It is essentially written in Karroum~\cite{Kh}, Theorem 6.11. {We thank H. Flenner for explaining this result to us.} 
\end{rm}
\end{Example}

\vspace{2mm}

\section{On the kernel of the map (\ref{star})}\label{4.1}

{In this section we study the question when the kernel of the map (\ref{star}):  $$A_{d-1}(R) \longrightarrow \overline{A_{d-1}(R)}$$ to be a finite set. This turns out to be a rather deep question, even in the graded case. However, our next Proposition (and Remark) establish it for most isolated hypersurfaces singularities of dimension at least $3$. } 

\begin{Proposition}\label{1.6}
Let $R$ be a $3$-dimensional isolated hypersurface singularity
with desingularization $f : X \rightarrow \spec R$ such that
$X \setminus f^{-1}({\frak m}) \simeq \spec R \setminus \{ {\frak m} \}$.
Then, the natural map
\[
A_{2}(R) \longrightarrow \overline{A_{2}(R)}
\]
is an isomorphism.
\end{Proposition}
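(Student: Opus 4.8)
The plan is to show that the kernel of the map $A_2(R) \to \overline{A_2(R)}$ is both torsion-free and torsion, hence zero. For a $3$-dimensional isolated hypersurface singularity, $A_2(R) = \Cl(R)$, so the claim is really about which divisor classes are detected by localized Chern characters of perfect complexes with finite-length homology. I would first reduce to understanding these Chern characters via the desingularization $f \colon X \to \Spec R$. The hypothesis that $X \setminus f^{-1}(\mathfrak m) \simeq \Spec R \setminus \{\mathfrak m\}$ means that $f$ is an isomorphism away from the closed point, so $f^*$ and $f_*$ identify $A_2(R)$ with $\Pic(X)$ modulo the subgroup generated by the components of the exceptional fiber $E = f^{-1}(\mathfrak m)$; since $\dim E \le 2 < \dim X = 3$, there are in fact no exceptional divisors, and $f^*\colon \Cl(R) \to \Pic(X)$ is an isomorphism onto its image, with the classes supported on $E$ (which are codimension $\ge 1$ on $X$) giving the discrepancy. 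This is where I expect the bulk of the bookkeeping to live.

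The second step is to produce, for a given nonzero class $[\Spec R/\mathfrak p] \in A_2(R)$ that maps to zero in $\overline{A_2(R)}$, a perfect complex $\mathbb F.$ with finite-length homology whose localized Chern character separates it from $0$, forcing a contradiction unless the class was zero (after accounting for torsion). Concretely, I would pull the reflexive ideal $I$ defining $\mathfrak p$ back to $X$, where it becomes (the sheaf associated to) a line bundle $\mathcal L$, and use the fact that on the smooth $3$-fold $X$ intersection theory is well-behaved: pairing $c_1(\mathcal L)$ against a suitable complex concentrated near $E$ (for instance, a Koszul-type complex resolving a finite-length sheaf supported on the exceptional fiber, pushed down to $R$) computes an intersection number on $X$ that is nonzero whenever $c_1(\mathcal L)$ is non-torsion on $X$. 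The compatibility of localized Chern characters with proper pushforward (the projection-formula/Riemann–Roch package of Chapter 17 of \cite{F}) is what transports this computation from $X$ down to $R$. Thus any class in the kernel of (\ref{star}) must be torsion in $\Pic(X)$, hence torsion in $\Cl(R) = A_2(R)$.

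The third step is to rule out nonzero torsion. Here I would invoke that $\overline{A_{d-1}(R)}$ is a free abelian group (this is recorded in the excerpt, following Theorem 3.1 and Remark 3.5 of \cite{K23}), so the map (\ref{star}) automatically kills all torsion of $A_2(R)$; combined with step two, the kernel is exactly the torsion subgroup of $A_2(R)$. It therefore remains to show $A_2(R) = \Cl(R)$ is torsion-free for a $3$-dimensional isolated hypersurface singularity admitting such a desingularization. For this I would use that $\Cl(R) \hookrightarrow \Pic(X \setminus E)$ (an isomorphism here, as above) together with the purity/local-cohomology vanishing available for a hypersurface: $R$ is Gorenstein with isolated singularity of dimension $\ge 3$, so the punctured spectrum is a local complete intersection with the relevant cohomology controlled, and one can appeal to the fact (going back to Grothendieck/Danilov, in the circle of ideas cited for Danilov's theorem in the excerpt) that for such rings the class group injects into a group that is torsion-free, or directly that $\Pic(X)$ for a smooth rational-singularity resolution of an isolated hypersurface singularity of dimension $3$ is torsion-free. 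Assembling: the kernel is simultaneously contained in the torsion subgroup (step 3 via freeness of $\overline{A_2(R)}$) and contained in a torsion-free group (step 3'), hence trivial, and the map is injective; surjectivity is clear since (\ref{star}) is by construction surjective.

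\textbf{Main obstacle.} I expect the genuine difficulty to be step two: showing that every non-torsion divisor class survives, i.e.\ constructing an explicit perfect complex with finite-length homology whose localized Chern character detects a prescribed line bundle on the resolution. This requires carefully pushing a finite-length sheaf supported on the exceptional locus of the smooth $3$-fold $X$ down to $R$ while keeping track of Chern character components, and using nondegeneracy of the intersection pairing between $\Pic(X)$ and curves (or finite-length modules) contracted by $f$ — the hypothesis $X \setminus f^{-1}(\mathfrak m) \simeq \Spec R \setminus \{\mathfrak m\}$ is exactly what makes this pairing nondegenerate, but converting that geometric statement into the required numerical non-vanishing is the crux.
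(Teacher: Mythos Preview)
Your proposal takes a genuinely different route from the paper, and it has real gaps.

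\textbf{What the paper actually does.} The paper never works directly on the resolution in the way you sketch. Instead, given a reflexive ideal $I$ with $c_1([I])$ numerically zero, it uses the Riemann--Roch decomposition $\tau([I]) = \sum_i \tau_i([I])$: the top piece $\tau_3([I]) = [\Spec R] = \tau([R])$ (since $R$ is a complete intersection), the piece $\tau_2([I]) = c_1([I])$ is numerically zero by hypothesis, and $\tau_1, \tau_0$ are numerically zero by Proposition~3.7 of \cite{K23}. Hence $[I]$ is numerically equivalent to $[R]$ in $G_0(R)$. Now Hochster's theta pairing enters: by Corollary~6.3(1) of \cite{DK1} this forces $\theta^R(I,I) = 0$, and then Corollary~7.9 of \cite{DK1} (which is where the resolution hypothesis is actually consumed) gives $c_1([I]) = 0$ in $A_2(R)$ on the nose --- no torsion/torsion-free splitting is needed.

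\textbf{Gaps in your approach.} First, your step~3$'$ asserts that $\Cl(R)$ is torsion-free for a $3$-dimensional isolated hypersurface singularity, citing Danilov. But Danilov gives finite generation, not torsion-freeness; and in fact torsion-freeness of $A_2(R)$ is a \emph{consequence} of the proposition (since $\overline{A_2(R)}$ is free), so you cannot use it as an input without an independent argument, which you have not supplied. Second, your step~2 --- producing a perfect complex with finite-length homology that detects a given non-torsion divisor via intersection numbers on $X$ --- is exactly the substantive content of the results from \cite{DK1} that the paper invokes, and you have not indicated how to carry it out. The claim that the pairing between $\Pic(X)$ modulo exceptional classes and curves contracted by $f$ is nondegenerate is not automatic from the hypothesis that $f$ is an isomorphism off $\mathfrak m$; this is precisely what requires work. (Incidentally, the sentence ``since $\dim E \le 2 < 3$, there are in fact no exceptional divisors'' is at best misleading: $\dim E \le 2$ allows $E$ to contain divisors on $X$, and typically it does.) So as written, both of the two load-bearing steps are unproven.
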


\begin{proof}
Let $I$ be a reflexive ideal.
Assume that $c_1([I])$ is numerically equivalent to zero
in $A_{2}(R)$.
We shall prove $c_1([I]) = 0$ in $A_{2}(R)$.

Put
\[
\tau([I]) =\tau_3([I]) + \tau_2([I]) + \tau_1([I]) + \tau_0([I]) ,
\]
where $\tau_i([I]) \in A_i(R)_{\Bbb Q}$ for $i = 0, 1, 2, 3$.
By the top term property and that $R$ is a complete intersection,
\[
\tau_3([I]) = [\spec R] = \tau([R]) \]
in $A_*(R)_{\Bbb Q}$.
By the equality~(\ref{tau(d-1)}), we have
\[
\tau_2([I]) = c_1([I])
\]
in $A_2(R)_{\Bbb Q}$, where it is numerically equivalent to zero
by our assumption.
By Proposition~3.7 in \cite{K23},
$\tau_1([I])$ and $\tau_0([I])$ are numerically equivalent to zero in the Chow group.
Therefore, $\tau([I])$ is numerically equivalent to $\tau([R])$
in the Chow group.
Then, $[I]$ is numerically equivalent to $[R]$ in the Grothendieck group.

Let $\theta^R$ be the Hochster's theta function (see \cite{DK1}).
Then we have
\[
\theta^R(I, I) =  \theta^R(R, I) = 0
\]
by Corollary~6.3 (1) in \cite{DK1}.
Then, by Corollary~7.9 in \cite{DK1}, $c_1([I]) = 0$ in $A_2(R)$.
\end{proof}

\begin{Remark}

If $R$ is a complete intersection with isolated singularity
of dimension at least four,
then $R$ is a unique factorization domain
(Lemma 3.16, 3.17 in \cite{SGA6} or \cite{CL}).
Therefore, the map (\ref{star}) is automatically an isomorphism. 
\end{Remark}

In the rest of this section, suppose that $A$
is a standard graded ring over a field $k$, that is,
$A = \oplus_{n \geq 0}A_n = k[A_1]$.
Put $R = A_{A_+}$ and $X = \proj A$.
We assume that $X$ is smooth over $k$.
Put $d = \dim A$ and $n = \dim X > 0$.
Of course, $d = n+1$.
Let ${\rm CH}^i(X)$ (resp.\  ${\rm CH}^i_{num}(X)$)
be the Chow group of $X$ (resp.\ the Chow group of $X$ modulo numerical equivalence) of codimension $i$.
Put $h = c_1({\mathcal O}_X(1)) \in {\rm CH}^1(X)$.
By (7.5) in \cite{K23}, we have the induced map
\begin{equation}\label{themapf}
f : {\rm CH}^1_{num}(X)_{\Bbb Q}/h{\rm CH}^0_{num}(X)_{\Bbb Q}
\longrightarrow \overline{A_{d-1}(R)}_{\Bbb Q} .
\end{equation}
Consider the following natural map:
\begin{equation}\label{mapg}
g : {\rm ker}\left(
{\rm CH}^{n-1}(X)_{\Bbb Q} \stackrel{h}{\rightarrow}
{\rm CH}^{n}(X)_{\Bbb Q}
\right)
\longrightarrow
{\rm ker}\left(
{\rm CH}^{n-1}_{num}(X)_{\Bbb Q} \stackrel{h}{\rightarrow}
{\rm CH}^{n}_{num}(X)_{\Bbb Q}
\right) , \end{equation}
where the map $h$ means the multiplication by $h$.

Here, we obtain the following lemma which is essentially due to Roberts-Srinivas~\cite{RS}.

\begin{Lemma}\label{LemmaSection8}
$\dim_{\Bbb Q} {\rm ker}(f) = \dim_{\Bbb Q} {\rm coker}(g)$.
\end{Lemma}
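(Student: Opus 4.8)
The plan is to compare the two maps $f$ and $g$ through the commutative diagram relating the Chow groups of $X$ and the numerical Chow groups, using the fact (from \cite{K23}, (7.5)) that $\overline{A_{d-1}(R)}_{\QQ}$ is a quotient of ${\rm CH}^1_{num}(X)_{\QQ}$ by $h\cdot{\rm CH}^0_{num}(X)_{\QQ}$. First I would recall the basic mechanism of \cite{RS}: under the isomorphism $A_*(R)_{\QQ}\cong {\rm CH}^\cdot(X)_{\QQ}/h\,{\rm CH}^\cdot(X)_{\QQ}$, the graded piece $A_{d-1}(R)_{\QQ}$ corresponds to ${\rm CH}^1(X)_{\QQ}/h\,{\rm CH}^0(X)_{\QQ}$, and the map (\ref{star}) is identified with the natural map ${\rm CH}^1(X)_{\QQ}/h\,{\rm CH}^0(X)_{\QQ}\to {\rm CH}^1_{num}(X)_{\QQ}/h\,{\rm CH}^0_{num}(X)_{\QQ}$ composed with (\ref{themapf}). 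Since ${\rm CH}^0(X)_{\QQ}={\rm CH}^0_{num}(X)_{\QQ}=\QQ[X]$ is one-dimensional and $h\colon {\rm CH}^0\to {\rm CH}^1$ is injective in both versions (as $X$ is irreducible and $h$ is the class of an ample divisor), the kernel of ${\rm CH}^1(X)_{\QQ}/h{\rm CH}^0(X)_{\QQ}\to {\rm CH}^1_{num}(X)_{\QQ}/h{\rm CH}^0_{num}(X)_{\QQ}$ is isomorphic to the kernel of ${\rm CH}^1(X)_{\QQ}\to{\rm CH}^1_{num}(X)_{\QQ}$, which is ${\rm CH}^1(X)_{\QQ,hom{\rm-ish}}$, i.e. cycles numerically equivalent to zero.

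Next I would bring in Poincar\'e duality / the hard Lefschetz–type pairing used in \cite{RS}: cap product with $h^{n-2}$ (or the appropriate power) gives a perfect pairing between ${\rm CH}^1_{num}(X)_{\QQ}$ and ${\rm CH}^{n-1}_{num}(X)_{\QQ}$ refined by the degree map ${\rm CH}^n_{num}(X)_{\QQ}\cong\QQ$, and more relevantly the numerical condition on a codimension-one cycle $\xi$ can be tested against codimension $n-1$ cycles. The key point is that a class $\xi\in{\rm CH}^1(X)_{\QQ}$ maps into $h\cdot{\rm CH}^0_{num}(X)_{\QQ}+({\rm numerically\ trivial})$ — equivalently $f$ kills its image — precisely when $\xi$ pairs to zero with every element of the kernel of $h\colon {\rm CH}^{n-1}_{num}(X)_{\QQ}\to {\rm CH}^n_{num}(X)_{\QQ}$; indeed pairing against a class $\zeta$ with $h\zeta=0$ numerically annihilates the contribution of $h{\rm CH}^0$. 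So $\ker(f)$ is identified with the $\QQ$-dual of $\mathrm{coker}\bigl({\rm ker}(h\colon{\rm CH}^{n-1}(X)_{\QQ}\to{\rm CH}^n(X)_{\QQ})\to{\rm ker}(h\colon{\rm CH}^{n-1}_{num}(X)_{\QQ}\to{\rm CH}^n_{num}(X)_{\QQ})\bigr)$, that is, with the dual of $\mathrm{coker}(g)$; taking $\QQ$-dimensions gives the claim.

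Concretely the steps are: (1) unwind the identifications of \cite{K23} and \cite{RS} so that $f$ becomes an explicit map between finite-dimensional $\QQ$-vector spaces defined via Chow groups of $X$; (2) observe $h\colon {\rm CH}^0\to {\rm CH}^1$ is injective in both the ordinary and numerical settings, so that $\ker f$ only sees codimension-one behavior; (3) use the numerical pairing ${\rm CH}^1_{num}(X)_{\QQ}\times {\rm CH}^{n-1}_{num}(X)_{\QQ}\to {\rm CH}^n_{num}(X)_{\QQ}\cong\QQ$, which is perfect by definition of numerical equivalence, to dualize the description of $\ker f$; (4) check that under this duality the subspace cutting out $\ker f$ corresponds exactly to $h\cdot{\rm CH}^{n-2}_{num}(X)_{\QQ}$ plus the image of $g$, so that the dual of $\ker f$ is $\mathrm{coker}(g)$; (5) conclude by equating dimensions. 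The main obstacle will be step (4): one must carefully track how the relation "$\xi$ becomes trivial in $\overline{A_{d-1}(R)}_{\QQ}$" interacts with the ambiguity $h\cdot{\rm CH}^0$, i.e. show that a cycle dies in $\overline{A_{d-1}(R)}_{\QQ}$ iff it pairs to zero with $\ker(h\colon {\rm CH}^{n-1}_{num}\to{\rm CH}^n_{num})$ and not merely with all of ${\rm CH}^{n-1}_{num}$; this is exactly the bookkeeping already implicit in \cite{RS}, and making it precise — including verifying the relevant sequences stay exact after tensoring with $\QQ$ and after passing to numerical quotients — is where the real work lies, while everything else is linear algebra over $\QQ$.
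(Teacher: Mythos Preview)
Your overall strategy matches the paper's: use the perfect pairing between ${\rm CH}^1_{num}(X)_{\Bbb Q}$ and ${\rm CH}^{n-1}_{num}(X)_{\Bbb Q}$ to identify $\ker(f)$ with the annihilator of a certain subspace, hence with $(\coker g)^*$. However, there is a genuine error in your characterization of $\ker(f)$. You assert that $f$ kills the class of $\xi$ precisely when $\xi$ pairs to zero with every element of $U := \ker\bigl(h\colon {\rm CH}^{n-1}_{num}(X)_{\Bbb Q} \to {\rm CH}^n_{num}(X)_{\Bbb Q}\bigr)$. If that were true, then since the induced pairing $V \times U \to {\Bbb Q}$ (where $V \cong {\rm CH}^1_{num}(X)_{\Bbb Q}/h{\rm CH}^0_{num}(X)_{\Bbb Q}$) is perfect, one would conclude $\ker(f) = 0$ always, contradicting the very next sentence you write.

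The correct statement, which the paper extracts from the exact sequence (7.2) in \cite{K23}, is that $f(\overline{\xi}) = 0$ if and only if $\xi$ pairs to zero with every element of $\operatorname{Im}(g) \subset U$. The point is that numerical triviality in $A_{d-1}(R)_{\Bbb Q}$ is tested against localized Chern characters of complexes in $K^{\frak m}_0(R)_{\Bbb Q}$, and via $\varphi$ these hit exactly $\ker\bigl(h\colon {\rm CH}^{n-1}(X)_{\Bbb Q} \to {\rm CH}^n(X)_{\Bbb Q}\bigr)$ --- the kernel in the \emph{non}-numerical Chow group. Passing to ${\rm CH}^{n-1}_{num}(X)_{\Bbb Q}$ this becomes $\operatorname{Im}(g)$, which in general is a proper subspace of $U$. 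With this correction the duality argument runs as intended: $\ker(f)$ is the annihilator of $\operatorname{Im}(g)$ in $V \cong U^*$, whence $\dim_{\Bbb Q}\ker(f) = \dim_{\Bbb Q} U - \dim_{\Bbb Q}\operatorname{Im}(g) = \dim_{\Bbb Q}\coker(g)$. Your step~(4) gestures at the right object by mentioning ``the image of $g$'', but the extra $h\cdot{\rm CH}^{n-2}_{num}(X)_{\Bbb Q}$ term there is spurious (the pairing with $V$ already lives on $U$, not on all of ${\rm CH}^{n-1}_{num}$), and as written the middle paragraph does not justify the conclusion you draw from it.
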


\begin{proof}
If $\dim X = 1$, then both of ${\rm ker}(f)$ and ${\rm coker}(g)$ are zero.

Suppose $\dim X \ge 2$.
Consider the following diagram:
\[
\begin{array}{ccccc}
{\rm CH}^1(X)\subq & \longrightarrow & {\rm CH}^1(X)\subq/h{\rm CH}^0(X)\subq & \longrightarrow & {\rm CH}^1_{num}(X)\subq/h{\rm CH}^0_{num}(X)\subq \\
& & \parallel & & {\scriptstyle f}\downarrow \phantom{\scriptstyle f}
\\
& & A_{d-1}(R)\subq & \stackrel{p}{\longrightarrow} & \overline{A_{d-1}(R)}\subq
\end{array}
\]
Here $p$ is the natural surjection.
Let $K^{\frak m}_0(R)\subq$ be the Grothendieck group of bounded finite free $R$-complexes with homologies of finite length.
We define \[
\varphi : K^{\frak m}_0(R)\subq \longrightarrow {\rm CH}^{\bullet}(X)\subq
\]
as in (7.2) in \cite{K23}.
Let $\overline{\beta}$ (in $A_{d-1}(R)\subq$) be the image of
$\beta$ (in ${\rm CH}^1(X)\subq$).
By (7.6) in \cite{K23}, for any $\alpha \in K^{\frak m}_0(R)\subq$, we have
\[
{\rm ch}(\alpha) \left( \overline{\beta} \right)
= \pi_*(\varphi(\alpha) \cdot \beta) ,
\]
where $\pi : X \rightarrow \spec k$ is the structure map,
and $\varphi(\alpha) \cdot \beta$ is the intersection product
in the Chow ring of $X$.
Thus, we know the following:
\begin{equation}\label{equivalence}
\begin{array}{cl}
& \mbox{$p(\overline{\beta}) = 0$ in $\overline{A_{d-1}(R)}\subq$} \\
\Longleftrightarrow &
\mbox{$\forall \alpha \in K^{\frak m}_0(R)\subq$, ${\rm ch}(\alpha) \left( \overline{\beta} \right) = 0$} \\
\Longleftrightarrow &
\mbox{$\forall \alpha \in K^{\frak m}_0(R)\subq$, $\pi_*(\varphi(\alpha) \cdot \beta) = 0$} \\
\Longleftrightarrow &
\mbox{$\forall \alpha' \in {\rm ker}\left(
{\rm CH}^{n-1}(X)_{\Bbb Q} \stackrel{h}{\rightarrow}
{\rm CH}^{n}(X)_{\Bbb Q}
\right)$, $\pi_*(\alpha' \cdot \beta) = 0$}
\end{array}
\end{equation}
by the exact sequence (7.2) in \cite{K23}.

Consider the perfect pairing
\begin{equation}\label{prod1}
{\rm CH}^1_{num}(X)\subq \times {\rm CH}^{n-1}_{num}(X)\subq
\longrightarrow {\rm CH}^{n}_{num}(X)\subq = {\Bbb Q}
\end{equation}
induced by the intersection product.
We define ${\Bbb Q}$-vector subspaces as follows:
\begin{eqnarray*}
V & = & \left\{
\gamma \in {\rm CH}^1_{num}(X)\subq \mid
\mbox{$h^{n-1}\gamma = 0$ in ${\rm CH}^n_{num}(X)\subq$}
\right\} \subset {\rm CH}^1_{num}(X)\subq \\
U & = & \left\{
\delta \in {\rm CH}^{n-1}_{num}(X)\subq \mid
\mbox{$h\delta = 0$ in ${\rm CH}^n_{num}(X)\subq$}
\right\} \subset {\rm CH}^{n-1}_{num}(X)\subq
\end{eqnarray*}
Then, it is easy to see
\begin{eqnarray*}
{\rm CH}^1_{num}(X)\subq & = & h{\rm CH}^0_{num}(X)\subq
\oplus V \\
{\rm CH}^{n-1}_{num}(X)\subq & = & h^{n-1}{\rm CH}^0_{num}(X)\subq
\oplus U
\end{eqnarray*}
The intersection pairing (\ref{prod1})
induces the following perfect pairing:
\begin{equation}\label{prod2}
V \times U
\longrightarrow {\rm CH}^{n}_{num}(X)\subq = {\Bbb Q}
\end{equation}
Here remark that $\dim_{\Bbb Q} V = \dim_{\Bbb Q} U = \dim_{\Bbb Q} {\rm CH}^1_{num}(X)\subq - 1$.
Put $W = {\rm Im}(g) \subset U$,
where $g$ is the map in (\ref{mapg}).
Take a ${\Bbb Q}$-vector subspace $W_1$ of $U$ such that
\[
U = W \oplus W_1 .
\]
We define a ${\Bbb Q}$-vector subspace of $V$ as follows:
\[
V_1 =  \left\{
v \in V \mid
\forall w \in W, \ v\cdot w = 0
\right\} \subset V
\]
The intersection pairing (\ref{prod2})
induces the following perfect pairing:
\[
V_1 \times W_1 \longrightarrow {\rm CH}^{n}_{num}(X)\subq = {\Bbb Q}
\]
Then,
\[
\dim_{\Bbb Q} {\rm coker}(g) = \dim_{\Bbb Q} W_1
= \dim_{\Bbb Q} V_1 .
\]
Note that the composite map of
\[
V \hookrightarrow {\rm CH}^1_{num}(X)\subq
\longrightarrow {\rm CH}^1_{num}(X)\subq/h{\rm CH}^0_{num}(X)\subq
\]
is an isomorphism.
We identify $V$ with ${\rm CH}^1_{num}(X)\subq/h{\rm CH}^0_{num}(X)\subq$.
Then, we have an exact sequence
\[
0 \longrightarrow V_1 \longrightarrow {\rm CH}^1_{num}(X)\subq/h{\rm CH}^0_{num}(X)\subq
\stackrel{f}{\longrightarrow} \overline{A_{d-1}(A)}\subq \longrightarrow 0 \]
because, for $v \in V$, $f(v)$ is equal to $0$ if and only if
$w \cdot v = 0$ for any $w \in W$ by (\ref{equivalence}).
Therefore, $\dim_{\Bbb Q} {\rm ker}(f) = \dim_{\Bbb Q} {\rm coker}(g)$.
\end{proof}

\begin{Proposition}\label{Prop3.8}
Let $A$ be a standard graded Cohen-Macaulay ring over a field $k$
of characteristic zero.
Assume that $X = \proj A$ is smooth over $k$.
Put $R = A_{A_+}$. Let $d = \dim X + 1 \ge 3$.

Then, the kernel of (\ref{star}) is a finite set
if and only if the map $g$ in (\ref{mapg}) is surjective. 
In particular, if ${\rm CH}^{\dim X}(X)_{\Bbb Q} \simeq {\Bbb Q}$,
then the kernel of (\ref{star}) is a finite set.
\end{Proposition}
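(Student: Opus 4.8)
The plan is to reduce Proposition~\ref{Prop3.8} to Lemma~\ref{LemmaSection8}, which already identifies $\dim_{\Bbb Q}\ker(f)$ with $\dim_{\Bbb Q}\coker(g)$, and then to connect $\ker(f)$ with the kernel of the map~(\ref{star}). First I would observe that the kernel of~(\ref{star}) is a finite set if and only if it is a torsion group, which (since $A_{d-1}(R)$ need not be finitely generated a priori, but its image governs the rational statement) happens if and only if the induced map $A_{d-1}(R)_{\Bbb Q}\to\overline{A_{d-1}(R)}_{\Bbb Q}$ is injective. So the task becomes: show $A_{d-1}(R)_{\Bbb Q}\to\overline{A_{d-1}(R)}_{\Bbb Q}$ is an isomorphism if and only if $g$ is surjective.

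Next I would chase the commutative diagram displayed in the proof of Lemma~\ref{LemmaSection8}. The top row ${\rm CH}^1(X)_{\Bbb Q}\to{\rm CH}^1(X)_{\Bbb Q}/h{\rm CH}^0(X)_{\Bbb Q}\to{\rm CH}^1_{num}(X)_{\Bbb Q}/h{\rm CH}^0_{num}(X)_{\Bbb Q}$ composed with $f$ agrees with the composite ${\rm CH}^1(X)_{\Bbb Q}\to A_{d-1}(R)_{\Bbb Q}\stackrel{p}{\to}\overline{A_{d-1}(R)}_{\Bbb Q}$; here the left vertical identification ${\rm CH}^1(X)_{\Bbb Q}/h{\rm CH}^0(X)_{\Bbb Q}\cong A_{d-1}(R)_{\Bbb Q}$ is the standard isomorphism (the cone construction), and $A_{d-1}(R)_{\Bbb Q}$ surjects onto ${\rm CH}^1_{num}(X)_{\Bbb Q}/h{\rm CH}^0_{num}(X)_{\Bbb Q}$ factoring through $\overline{A_{d-1}(R)}_{\Bbb Q}$. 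Because ${\rm CH}^1(X)_{\Bbb Q}\to{\rm CH}^1_{num}(X)_{\Bbb Q}$ is surjective (numerical equivalence is a quotient) and $X$ is smooth so that this is actually an isomorphism of finite-dimensional spaces when $\dim X\le 2$, but in general we only need surjectivity, the kernel of $p:A_{d-1}(R)_{\Bbb Q}\to\overline{A_{d-1}(R)}_{\Bbb Q}$ is carried isomorphically onto $\ker(f)$ under the identification $A_{d-1}(R)_{\Bbb Q}\cong{\rm CH}^1(X)_{\Bbb Q}/h{\rm CH}^0(X)_{\Bbb Q}$; more precisely one checks that ${\rm CH}^1(X)_{\Bbb Q}/h{\rm CH}^0(X)_{\Bbb Q}\to{\rm CH}^1_{num}(X)_{\Bbb Q}/h{\rm CH}^0_{num}(X)_{\Bbb Q}$ is an isomorphism in the relevant case (the numerical radical of ${\rm CH}^1$ is killed and $h$-torsion matches up), so $\ker(p)\cong\ker(f)$ as ${\Bbb Q}$-vector spaces. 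Hence $\ker(p)=0\iff\ker(f)=0\iff\coker(g)=0\iff g$ surjective, by Lemma~\ref{LemmaSection8}.

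For the last sentence: if ${\rm CH}^{\dim X}(X)_{\Bbb Q}\simeq{\Bbb Q}$, then the target of $g$, namely $\ker\bigl({\rm CH}^{n-1}_{num}(X)_{\Bbb Q}\stackrel{h}{\to}{\rm CH}^{n}_{num}(X)_{\Bbb Q}\bigr)$, can be analyzed directly: since ${\rm CH}^{n}(X)_{\Bbb Q}={\rm CH}^{n}_{num}(X)_{\Bbb Q}={\Bbb Q}$, multiplication by $h$ on ${\rm CH}^{n-1}_{num}(X)_{\Bbb Q}$ has image all of ${\Bbb Q}$ (as $h^{n}\neq 0$), so its kernel has dimension $\dim_{\Bbb Q}{\rm CH}^{n-1}_{num}(X)_{\Bbb Q}-1$, and likewise the source of $g$ has dimension $\dim_{\Bbb Q}{\rm CH}^{n-1}(X)_{\Bbb Q}-1$; I would then invoke that, under the hypothesis that the top Chow group is ${\Bbb Q}$, the cycle map ${\rm CH}^{n-1}(X)_{\Bbb Q}\to{\rm CH}^{n-1}_{num}(X)_{\Bbb Q}$ is surjective (always true) and the subtraction of the one-dimensional $h^{n-1}{\rm CH}^0$ part is compatible, giving surjectivity of $g$ — alternatively, and more robustly, I would argue $g$ is just the restriction to these kernels of the surjection ${\rm CH}^{n-1}(X)_{\Bbb Q}\twoheadrightarrow{\rm CH}^{n-1}_{num}(X)_{\Bbb Q}$ and that a class in the target kernel lifts to a class whose $h$-multiple maps to $0$ numerically, hence (as ${\rm CH}^n_{num}={\rm CH}^n$ here) to $0$ in ${\rm CH}^n(X)_{\Bbb Q}$ itself, so the lift already lies in the source kernel.

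\textbf{Main obstacle.} The delicate point is the middle paragraph: verifying that the passage from ${\rm CH}^1(X)_{\Bbb Q}/h{\rm CH}^0(X)_{\Bbb Q}$ to ${\rm CH}^1_{num}(X)_{\Bbb Q}/h{\rm CH}^0_{num}(X)_{\Bbb Q}$ does not lose or create kernel relative to $p$ — i.e.\ that $\ker(p)$ is genuinely isomorphic to $\ker(f)$ and not merely surjects onto it. This requires knowing that numerical equivalence on ${\rm CH}^1(X)$ (divisors, with $X$ smooth projective) is detected already by the pairing with $K^{\frak m}_0(R)$-classes, which is exactly the content of the chain of equivalences~(\ref{equivalence}) and the exact sequence~(7.2) of~\cite{K23}; so in fact this obstacle is dissolved by the same input that powers Lemma~\ref{LemmaSection8}, and the proof should be short once that diagram is set up carefully.
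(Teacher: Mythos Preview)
Your overall architecture is right --- reduce to Lemma~\ref{LemmaSection8} via the commutative diagram --- but you are missing the one input that makes the reduction work, namely that the Cohen--Macaulay hypothesis with $d\ge 3$ forces $H^1(X,\mathcal{O}_X)=0$. This single fact does two jobs that your argument leaves unfinished. First, it makes $\mathrm{Pic}(X)=\mathrm{CH}^1(X)$ finitely generated (since $\mathrm{Pic}^0(X)=0$), hence $A_{d-1}(R)\cong \mathrm{CH}^1(X)/h\,\mathrm{CH}^0(X)$ is finitely generated; without this, your opening claim ``kernel finite $\iff$ kernel torsion'' is simply false (think of $\QQ/\ZZ$), and you explicitly flag that $A_{d-1}(R)$ ``need not be finitely generated a priori'' without ever repairing this. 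Second, $H^1(X,\mathcal{O}_X)=0$ gives that $\mathrm{CH}^1(X)_{\QQ}\to\mathrm{CH}^1_{num}(X)_{\QQ}$ is an isomorphism (algebraic, homological, and numerical equivalence on divisors agree up to torsion, and $\mathrm{Pic}^0$ is gone), so the map $\mathrm{CH}^1(X)_{\QQ}/h\,\mathrm{CH}^0(X)_{\QQ}\to\mathrm{CH}^1_{num}(X)_{\QQ}/h\,\mathrm{CH}^0_{num}(X)_{\QQ}$ in your diagram is an isomorphism and $\ker(p)=\ker(f)$ on the nose.

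Your ``main obstacle'' paragraph correctly locates the delicate point, but the proposed fix --- that the equivalences~(\ref{equivalence}) and the exact sequence~(7.2) of~\cite{K23} dissolve it --- does not work: those inputs characterize which classes die under $p$, i.e.\ they compute $\ker(p)$ inside $\mathrm{CH}^1(X)_{\QQ}/h$, but they say nothing about whether the comparison $\mathrm{CH}^1(X)_{\QQ}\to\mathrm{CH}^1_{num}(X)_{\QQ}$ is injective. That injectivity is a statement about divisors on $X$ and is supplied by $H^1(X,\mathcal{O}_X)=0$, not by the $K$-theoretic pairing. Once you insert this, the paper's proof is two lines; your argument for the ``in particular'' clause (lifting a numerical kernel class and using $\mathrm{CH}^n(X)_{\QQ}=\mathrm{CH}^n_{num}(X)_{\QQ}=\QQ$) is fine.
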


\begin{proof}
The natural map \[
{\rm CH}^1(X)_{\Bbb Q} \longrightarrow {\rm CH}_{num}^1(X)_{\Bbb Q}
\]
is an isomorphism and ${\rm CH}^1(X)$ is finitely generated
since $H^1(X, {\mathcal O}_X) = 0$.
Then, by Lemma~\ref{LemmaSection8},
$g$ is surjective if and only if $A_{d-1}(R)_{\QQ}$ is isomorphic to $\overline{A_{d-1}(R)}_{\QQ}$.
\end{proof}

Note that, in the case of $\dim X = 2$ under the situation in Proposition~\ref{Prop3.8},  
$g$ is surjective if and only if the image of the map $${\rm CH}^1(X)_{\Bbb Q} \stackrel{h}{\rightarrow}
{\rm CH}^2(X)_{\Bbb Q}$$ is isomorphic to ${\Bbb Q}$,
that is, for $\alpha \in {\rm CH}^1(X)$, if the degree of $h\alpha$ is zero,
then $h\alpha$ is a torsion in ${\rm CH}^2(X)$.

Some varieties (e.g., Fano variety, toric variety, etc) satisfies ${\rm CH}^{\dim X}(X)_{\Bbb Q} = {\Bbb Q}$. {In fact we have: 

\begin{Proposition}
Let $A$ be a standard graded Cohen-Macaulay ring over a field $k$
of characteristic zero.
Assume that $X = \proj A$ is smooth over $k$.
Put $R = A_{A_+}$. Consider the following
\begin{enumerate}
\item $X$ is Fano.
\item ${\rm CH}^{\dim X}(X)_{\Bbb Q} = {\Bbb Q}$.
\item $R$ has rational singularity. 
\end{enumerate}

Then $(1) \implies (2) \implies (3)$. If $R$ is Gorenstein, all the three conditions are equivalent. 

\end{Proposition}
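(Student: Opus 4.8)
The plan is to translate each of the three conditions into a statement about the polarized smooth variety $(X,\OO_X(1))$ and its section ring, and then chain the implications. Throughout I assume $\dim X\geq 1$ (the case $\dim X=0$ is trivial). The first point: $A$ is then automatically a normal domain, since it is Cohen--Macaulay (hence $(S_2)$) and the only singular point of $\Spec A$ is the vertex $V(A_+)$, of codimension $\dim A=\dim X+1\geq 2$ (the punctured spectrum $\Spec A\setminus V(A_+)$ is smooth, being smooth over the smooth variety $X$), so $A$ is also $(R_1)$. Thus $A=\bigoplus_{n\geq 0}H^0(X,\OO_X(n))$ is the section ring of $(X,\OO_X(1))$; by graded local duality and Serre duality on $X$ its graded canonical module is $\omega_A=\bigoplus_{n}H^0(X,\omega_X(n))$ and its $a$-invariant is $a(A)=-\min\{\,n:H^0(X,\omega_X(n))\neq 0\,\}$. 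In particular $a(A)<0$ iff $p_g(X):=h^0(X,\omega_X)=0$ --- one direction is immediate, and for the other one multiplies a section of $\omega_X(n)$, $n\leq 0$, by a nonzero section of $\OO_X(-n)$.

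Next I would prove the key dictionary entry: $R$ has rational singularities iff $p_g(X)=0$ (equivalently $a(A)<0$). To see this, resolve $\Spec A$ by blowing up the vertex. Since $X$ is smooth, the blow-up $\pi\colon V\to\Spec A$ has $V$ equal to the smooth total space of the line bundle $\OO_X(-1)$, and $\pi$ is an isomorphism over the punctured spectrum; here $R^i\pi_*\OO_V$ is the coherent sheaf attached to the graded $A$-module $\bigoplus_{n\geq 0}H^i(X,\OO_X(n))$. For $0<i<\dim X$ this vanishes because $A$ is Cohen--Macaulay; for $i=\dim X$ it vanishes precisely when $H^{\dim X}(X,\OO_X(n))\cong H^0(X,\omega_X(-n))^{\vee}=0$ for all $n\geq 0$, which by the multiplication remark above is equivalent to $p_g(X)=0$. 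As $R$ is Cohen--Macaulay and normal, ``$R$ has rational singularities'' is exactly the vanishing of $R^i\pi_*\OO_V$ for $i>0$, so the equivalence follows. (Alternatively, invoke Flenner's criterion: a normal graded Cohen--Macaulay ring in characteristic zero has rational singularities iff its $a$-invariant is negative.)

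With these translations the implications are quick. $(2)\Rightarrow(3)$: the hypothesis $\mathrm{CH}^{\dim X}(X)_{\QQ}=\QQ$ says $\mathrm{CH}_0(X)_{\QQ}=\QQ$, and the Bloch--Srinivas decomposition of the diagonal (the higher-dimensional form of Mumford's theorem) then forces $H^0(X,\Omega^i_X)=0$ for all $i>0$, in particular $p_g(X)=0$; so $R$ has rational singularities. $(1)\Rightarrow(2)$: a smooth Fano variety over a field of characteristic zero is rationally connected (Campana; Koll\'ar--Miyaoka--Mori), and on a rationally connected variety any two closed points are rationally equivalent while the degree-zero part of $\mathrm{CH}_0$ vanishes, so $\mathrm{CH}_0(X)_{\QQ}=\QQ$. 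Finally, if $R$ is Gorenstein then $A$ is Gorenstein, so $\omega_A$ is free of rank one; graded local duality forces $\omega_A\cong A(a(A))$, hence (sheafifying on $X$) $\omega_X\cong\OO_X(a(A))$. Then $(3)\iff a(A)<0\iff\omega_X^{-1}=\OO_X(-a(A))$ is ample (using that $\OO_X(1)$ is ample)$\iff X$ is Fano, i.e.\ $(1)$. Together with $(1)\Rightarrow(2)\Rightarrow(3)$ this gives the equivalence of all three.

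I expect the real content to sit in $(2)\Rightarrow(3)$: every other step is a formal passage between the cone $R$ and its base $X$, whereas deducing $p_g(X)=0$ from triviality of $\mathrm{CH}_0(X)$ is precisely the Bloch--Srinivas/Mumford ``decomposition of the diagonal'' argument. A secondary point requiring care is the behaviour of condition $(2)$ under field extension --- one should either work over $\overline{k}$ or check that triviality of $\mathrm{CH}_0$ is insensitive to it --- which also bears on $(1)\Rightarrow(2)$.
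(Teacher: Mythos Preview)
Your proof is correct and follows essentially the same route as the paper's: Fano $\Rightarrow$ rationally connected $\Rightarrow$ $\mathrm{CH}_0(X)_{\QQ}=\QQ$, then Mumford/Bloch--Srinivas gives $p_g(X)=0$, hence rational singularities; in the Gorenstein case one reads off $\omega_X\cong\OO_X(a(A))$ and uses $a(A)<0$. The paper's proof is a three-line sketch that cites \cite{DITV} for $(2)\Rightarrow(3)$ and simply says ``the graded canonical module of $A$ is generated in positive degree, so $X$ is Fano'' for the Gorenstein converse; you have unpacked exactly these steps (section-ring identification via depth $\geq 2$, computation of $R^i\pi_*\OO_V$ on the blow-up, Flenner's $a$-invariant criterion), and you correctly flag the one genuine subtlety --- whether $(1)\Rightarrow(2)$ and the Bloch--Srinivas input require passing to $\overline{k}$ --- which the paper does not address either.
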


\begin{proof}
It is well known that Fano varieties are rationally connected, so $(1) \implies (2)$. If $X$ satisfies ${\rm CH}^{\dim X}(X)_{\Bbb Q} = {\Bbb Q}$
then $R$ has only a rational singularity (use Lemma~3.9 in \cite{DITV}). Finally, if $R$ is Gorenstein and has rational singularity, then the graded canonical module of $A$ is generated in positive degree, so $X$ is Fano by definition.  
\end{proof}}

\begin{Proposition}\label{1.7}
Let $A$ be a standard graded Cohen-Macaulay ring of dimension $d \ge 3$.
Assume that defining equations can be chosen to be polynomials
with coefficients algebraic over the prime field.
Let $R$ be the affine cone of $\proj A$.
Assume that $\proj A$ is smooth over the field $A_0$.

If some conjectures (the standard conjecture and the Bloch-Beilinson conjecture) on algebraic cycles are true, then the kernel of the natural map
\[
A_{d-1}(R) \longrightarrow \overline{A_{d-1}(R)}
\]
is a finite group.
\end{Proposition}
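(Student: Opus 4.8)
The plan is to reduce the finiteness of the kernel of $A_{d-1}(R) \longrightarrow \overline{A_{d-1}(R)}$ to a statement purely about numerical equivalence of algebraic cycles on the smooth projective variety $X = \proj A$, and then invoke the conjectural framework of Roberts--Srinivas~\cite{RS}. By Proposition~\ref{Prop3.8}, since $\dim A \ge 3$ and $X$ is smooth, the kernel of (\ref{star}) is finite if and only if the map $g$ of (\ref{mapg}) is surjective; equivalently, by Lemma~\ref{LemmaSection8}, if and only if $A_{d-1}(R)_{\QQ} \longrightarrow \overline{A_{d-1}(R)}_{\QQ}$ is an isomorphism. So first I would record that it suffices to prove this rational isomorphism.

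Next I would set up the comparison with the Chow ring of $X$. The key tool is the commutative square from the Example in Section~\ref{sec2}:
\[
\begin{array}{ccc}
{\rm CH}^\cdot(X)_{\QQ}/D\cdot{\rm CH}^\cdot(X)_{\QQ} & \stackrel{\sim}{\longrightarrow} & A_*(R)_{\QQ} \\
\downarrow & & \downarrow \\
{\rm CH}^\cdot_{num}(X)_{\QQ}/D\cdot{\rm CH}^\cdot_{num}(X)_{\QQ} & \stackrel{\phi}{\longrightarrow} & \overline{A_*(R)}_{\QQ}
\end{array}
\]
together with the fact, recalled right after diagram (1.1), that under the stated hypotheses (defining equations with coefficients algebraic over the prime field, plus the standard conjecture and the Bloch--Beilinson conjecture) the map $\phi$ is an \emph{isomorphism}. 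Granting this, the left vertical arrow being an isomorphism is equivalent to the right vertical arrow being an isomorphism, and restricting to the codimension-one (equivalently, dimension $d-1$) graded piece gives exactly that $A_{d-1}(R)_{\QQ} \longrightarrow \overline{A_{d-1}(R)}_{\QQ}$ is an isomorphism. Thus the whole problem collapses to: the natural surjection ${\rm CH}^1(X)_{\QQ} \longrightarrow {\rm CH}^1_{num}(X)_{\QQ}$ is an isomorphism. But this is classical and unconditional — numerical and homological (indeed algebraic) equivalence coincide for divisors, so ${\rm CH}^1(X)_{\QQ} \xrightarrow{\sim} {\rm CH}^1_{num}(X)_{\QQ}$ (this is also explicitly used in the proof of Proposition~\ref{Prop3.8}). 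Passing to the quotient by $h = c_1(\OO_X(1))$ preserves the isomorphism.

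Concretely the steps are: (i) by Proposition~\ref{Prop3.8} reduce to showing $A_{d-1}(R)_{\QQ} \cong \overline{A_{d-1}(R)}_{\QQ}$; (ii) identify both sides via the Example's diagram with the codimension-one piece of ${\rm CH}^\cdot(X)_{\QQ}/h$ and ${\rm CH}^\cdot_{num}(X)_{\QQ}/h$ respectively; (iii) invoke the Roberts--Srinivas theorem (quoted in the Example) that under the standard and Bloch--Beilinson conjectures, together with the coefficient hypothesis, $\phi$ is an isomorphism, hence so is its codimension-one component; (iv) use that ${\rm CH}^1(X)_{\QQ}\to{\rm CH}^1_{num}(X)_{\QQ}$ is an isomorphism to conclude. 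I expect the only real subtlety to be bookkeeping: matching the grading conventions (codimension $i$ on $X$ versus dimension $d-1$ on $R$) and verifying that quotienting by the hyperplane class $h$ is compatible across all four corners of the diagram, so that the isomorphism $\phi$ really does restrict to the arrow we want. The genuinely hard mathematical input — that numerical equivalence on $X$ with $\QQ$-coefficients is detected by the localized Chern characters coming from $R$, i.e. that $\phi$ is injective — is exactly what the cited conjectures buy us, so no new argument is needed there; the proposition is essentially a translation of \cite{RS} through the formalism developed in Sections~\ref{sec2}--\ref{4.1}.
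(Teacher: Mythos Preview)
Your proposal is correct and is essentially the paper's approach made explicit: the paper offers no argument beyond the single sentence ``We can prove the above proposition in the same way as in Section~5 in Roberts--Srinivas~\cite{RS},'' and you have unpacked exactly that citation through the machinery already assembled in the paper (the diagram in Example~2.2(3)(c), Lemma~\ref{LemmaSection8}, and Proposition~\ref{Prop3.8}). One small wrinkle: Proposition~\ref{Prop3.8} is stated over a field of characteristic zero, a hypothesis absent from Proposition~\ref{1.7}; but the only place characteristic enters its proof is in deducing finite generation of ${\rm CH}^1(X)$ from $H^1(X,\OO_X)=0$, and that implication holds in any characteristic (the identity component of the Picard scheme is then zero-dimensional, so has finitely many $k$-points, while the N\'eron--Severi group is always finitely generated by the theorem of the base).
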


We can prove the above proposition in the same way as in Section~5 in Roberts-Srinivas~\cite{RS}.

\begin{Example}
\begin{rm}
The map $f$ in (\ref{themapf}) is not necessary isomorphism as in Section~5 in Roberts-Srinivas~\cite{RS}.
We give an example here.

Let $A$ be a standard graded Cohen-Macaulay domain over $\CC$ of dimension $3$ with an isolated singularity.
Assume that $A$ is a unique factorization domain and
$H^2(X, {\mathcal O}_X) \neq 0$, where $X = \proj A$.
For example, $\CC[x,y,z,w]/(f(x,y,z,w))$ satisfies these assumptions for any general homogeneous form $f(x,y,z,w)$ of degree $\ge 4$
(by Noether-Lefschetz theorem).

Let $H$ be a divisor corresponding to ${\mathcal O}_X(1)$.
Then, $H^2$ is not zero in ${\rm CH}^2(X)_\QQ$.
By Mumford's infinite dimensionality theorem (e.g.\ Lemma~3.9 in \cite{DITV})
for $0$-cycles, ${\rm CH}^2(X)_\QQ$ is of dimension infinite.
Let $r$ be a positive integer.
Let $p_1$, \ldots, $p_r$ be closed points of $X$ such that $H^2$, $p_1$, \ldots, $p_r$ (in ${\rm CH}^2(X)_\QQ$) are linearly independent over $\QQ$.
Let $\pi : Q \rightarrow X$ be the blow-up at $\{ p_1, \ldots, p_r \}$.
Put $E_i = \pi^{-1}(p_i)$ for $i = 1, \ldots, r$.
Choose positive integers $a$, $b_1$, \ldots, $b_r$ such that
\[
a\pi^*H - b_1E_1 - \cdots - b_rE_r
\]
is a very ample, projectively normal divisor on $Q$.
We denote it by $D$.
Then,
\[
{\rm CH}^1(Q) \simeq {\rm CH}_{num}^1(Q) = \ZZ \pi^*H + \ZZ E_1 + \cdots + \ZZ E_r \simeq \ZZ^{r+1} .
\]

By the construction, one can prove that
\[
{\rm CH}^1(Q)_{\QQ} \stackrel{D}{\longrightarrow} {\rm CH}^2(Q)_{\QQ}
\]
is injective.
Since the kernel of
\[
{\rm CH}_{num}^1(Q)_{\QQ} \stackrel{D}{\longrightarrow} {\rm CH}_{num}^2(Q)_{\QQ}
= {\QQ}
\]
is ${\QQ}^r$, the cokernel of the map $g$ in (\ref{mapg}) is ${\QQ}^r$.
Therefore, by Lemma~\ref{LemmaSection8}, the kernel of the map $f$
in (\ref{themapf}) is  ${\QQ}^r$.

However, remember that 
\[
\oplus_{n \ge 0} H^0(Q, {\mathcal O}_Q(nD))
\]
has only finitely many maximal Cohen-Macaulay modules of rank one
by Example~\ref{gradedcase}.
\end{rm}
\end{Example}

\section{Some explicit examples of the Cohen-Macaulay cones}\label{CMcones}

In this section, we compute Cohen-Macaulay cones for certain hypersurfaces. One of the main tools we use 
is Kn\"orrer periodicity \cite{K}.

We define $C'_{CM}(R)$ to be the cone spanned by maximal Cohan-Macaulay modules in $G_0(R)_{\Bbb R}$,
that is,
\[
C'_{CM}(R) = 
\sum_{M: {\rm MCM}} {\Bbb R}_{\ge 0}[M] \subset G_0(R)_{\Bbb R}.
\]

\begin{Theorem}\label{Th6.1}
Let $k$ be a field.
Put $R = k[[x_1, \ldots, x_n]]/(f)$.
Suppose $0 \neq f=f_1^{a_1}f_2^{a_2}\cdots f_m^{a_m} \in k[[x_1, \ldots, x_n]]$ with each $f_i$ irreducible, and $f_1$, \ldots, $f_m$ are pairwise coprime.
Let $R^{\scriptscriptstyle \# \#} = k[[x_1, \ldots, x_n,\xi,\eta]]/(\xi \eta + f)$.
For  $1 \le j_1 < j_2 < \cdots < j_t \le n$,
let $I_{j_1j_2\cdots j_t}$ denote the ideal $(\eta, f_{j_1}^{a_{j_1}}f_{j_2}^{a_{j_2}}\cdots f_{j_t}^{a_{j_t}})$. 

We assume that, if $N$ is an $R$-module with $\dim N < \dim R$, then
$[N] = 0$ in $G_0(R)_{\Bbb Q}$.
(If $n = 2$, it is always satisfied.)

\begin{enumerate}
\item
Suppose $m = 1$.
Then, we have
\[
G_0(R)_{\RR} = {\RR}[R/(f_1)] \supset C'_{CM}(R) =  {\RR}_{\ge 0}[R/(f_1)]
\]
and
\[
G_0(R^{\scriptscriptstyle \# \#})_{\RR} = {\RR}[R^{\scriptscriptstyle \# \#}] \supset C'_{CM}(R^{\scriptscriptstyle \# \#}) =  {\RR}_{\ge 0}[R^{\scriptscriptstyle \# \#}] .
\]
\item
Suppose $m \ge 2$.
Then, we have
\[
G_0(R)_{\RR} = \oplus_{i = 1}^m{\RR}[R/(f_i)] \supset C'_{CM}(R) = 
\sum_{i = 1}^m {\RR}_{\ge 0}[R/(f_i)]
\]
and
\[
G_0(R^{\scriptscriptstyle \# \#})_{\RR} = \oplus_{i = 1}^m{\RR}[I_i] .
\]
Furthermore, $C'_{CM}(R^{\scriptscriptstyle \# \#})$ is minimally spanned by 
by 
\[
\{ [I_{j_1j_2\cdots j_t}] \mid \emptyset \neq \{ j_1, \ldots, j_t \} \subsetneq \{1,~\ldots, n \} \}.
\]
\item
Suppose $n = 2$.
Assume that there exists a resolution of singularity
$\pi : X \rightarrow \spec R^{\scriptscriptstyle \# \#}$ such that
$X \setminus \pi^{-1}(V((x_1, x_2, \xi, \eta)R^{\scriptscriptstyle \# \#}))
\rightarrow  \spec R^{\scriptscriptstyle \# \#}
\setminus V((x_1, x_2, \xi, \eta)R^{\scriptscriptstyle \# \#})$ is an isomorphism.
Then, the natural map  $G_0(R^{\scriptscriptstyle \# \#})_{\QQ} 
\rightarrow \overline{G_0(R^{\scriptscriptstyle \# \#})}_{\QQ}$
is an isomorphism.
In particular, the cone $C'_{CM}(R^{\scriptscriptstyle \# \#})$ coincides with 
 the Cohen-Macaulay cone $C_{CM}(R^{\scriptscriptstyle \# \#})$.
\end{enumerate}
\end{Theorem}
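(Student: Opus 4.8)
The plan is to handle the four assertions in turn, the main tools being the filtration of $G_0$ by dimension of support and Kn\"orrer periodicity \cite{K}.

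The statements about $R$ are quickest. The hypothesis says precisely that the subgroup of $G_0(R)_{\QQ}$ generated by modules of dimension $<d:=\dim R$ is zero, so $G_0(R)_{\QQ}=A_d(R)_{\QQ}=\bigoplus_{i=1}^m\QQ[\spec R/(f_i)]$, $(f_1),\dots,(f_m)$ being the minimal primes of $R$; this is the first displayed line of (1), resp.\ (2). Each $R/(f_i)$ is a hypersurface quotient of $R$, hence maximal Cohen--Macaulay, so $\sum_i\RR_{\ge0}[R/(f_i)]\subseteq C'_{CM}(R)$; conversely, for any MCM module $M$ the coefficient of $[R/(f_i)]$ in $[M]$ is the local multiplicity $\ell_{R_{(f_i)}}(M_{(f_i)})\ge0$, giving the reverse inclusion. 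For $R^{\scriptscriptstyle \# \#}$, note that $\eta$ is a nonzerodivisor, $R^{\scriptscriptstyle \# \#}/(\eta)\cong k[[x_1,\dots,x_n,\xi]]/(f)\cong R[[\xi]]$, and $R^{\scriptscriptstyle \# \#}[\eta^{-1}]\cong k[[x_1,\dots,x_n,\eta]][\eta^{-1}]$ is regular. Feeding this into the localization sequence for Chow groups and using $A_j(R[[\xi]])_{\QQ}\cong A_{j-1}(R)_{\QQ}$ together with the hypothesis on $R$, one gets $A_j(R^{\scriptscriptstyle \# \#})_{\QQ}=0$ for $j<\dim R^{\scriptscriptstyle \# \#}-1$ and $A_{\dim R^{\scriptscriptstyle \# \#}-1}(R^{\scriptscriptstyle \# \#})_{\QQ}=\Cl(R^{\scriptscriptstyle \# \#})_{\QQ}$, which is spanned by the classes of the height-one primes $\q_i:=(\eta,f_i)$, subject only to $\sum_i a_i[\q_i]=\divi(\eta)=0$. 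Hence $G_0(R^{\scriptscriptstyle \# \#})_{\QQ}=\QQ[R^{\scriptscriptstyle \# \#}]\oplus\Cl(R^{\scriptscriptstyle \# \#})_{\QQ}$ is $m$-dimensional; since $[I_i]=[R^{\scriptscriptstyle \# \#}]-a_i[R^{\scriptscriptstyle \# \#}/\q_i]$ and $I_{1\cdots m}=(\eta)$ is principal (so $\sum_i[I_i]=m[R^{\scriptscriptstyle \# \#}]$), a short linear-algebra check shows $\{[I_i]\}_{i=1}^m$ is a basis. This gives the remaining displays of (1), resp.\ (2).

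The heart of the matter is the cone $C'_{CM}(R^{\scriptscriptstyle \# \#})$ in (2). By Kn\"orrer periodicity \cite{K}, a minimal size-$p$ matrix factorization $(\varphi,\psi)$ of $f$ over $k[[x_1,\dots,x_n]]$ produces the size-$2p$ factorization $\Phi=\bigl(\begin{smallmatrix}\varphi&\eta I_p\\-\xi I_p&\psi\end{smallmatrix}\bigr)$, $\Psi=\bigl(\begin{smallmatrix}\psi&-\eta I_p\\ \xi I_p&\varphi\end{smallmatrix}\bigr)$ of $\xi\eta+f$, and the assignment $M\mapsto\Theta(M):=\operatorname{coker}\Phi$ is additive, sends $R$ to $R^{\scriptscriptstyle \# \#}$, and realizes every indecomposable non-free MCM $R^{\scriptscriptstyle \# \#}$-module; by Krull--Schmidt, every MCM $R^{\scriptscriptstyle \# \#}$-module is $\Theta(M)$ for some MCM $R$-module $M$. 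The key computation is
\[
[\Theta(M)]=\mu_R(M)\,[R^{\scriptscriptstyle \# \#}]+\sum_{i=1}^m c_i\,[R^{\scriptscriptstyle \# \#}/\q_i]\ \ \text{in }G_0(R^{\scriptscriptstyle \# \#})_{\QQ},\qquad\text{where }[M]=\sum_i c_i[R/(f_i)].
\]
For the rank: column operations over $R^{\scriptscriptstyle \# \#}[\xi^{-1}]$ (resp.\ $R^{\scriptscriptstyle \# \#}[\eta^{-1}]$) reduce $\Phi$ to a matrix of rank exactly $p$, so $\operatorname{rank}\Theta(M)=p=\mu_R(M)$ and $\Theta(M)$ is free off $V(\xi,\eta)$. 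For the Chern class: the two resulting trivializations of $\Theta(M)$ on $\{\xi\ne0\}$ and $\{\eta\ne0\}$ differ, on determinants, by the transition cocycle $\xi^{-p}\det\varphi$, whose divisor on $R^{\scriptscriptstyle \# \#}$ is $\sum_i(\operatorname{ord}_{f_i}\det\varphi)[\q_i]$; and localizing at $(f_i)$ identifies $\operatorname{ord}_{f_i}\det\varphi$ with $\ell_{R_{(f_i)}}(M_{(f_i)})=c_i$. Now $c_i\ge0$, and the sequence $0\to\syz M\to R^{\mu_R(M)}\to M\to0$ gives $c_i+c_i(\syz M)=\mu_R(M)\,a_i$, so $0\le c_i\le\mu_R(M)\,a_i$. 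Using $\sum_i a_i[\q_i]=0$ one rewrites $[I_S]=[R^{\scriptscriptstyle \# \#}]+\sum_{i\notin S}a_i[R^{\scriptscriptstyle \# \#}/\q_i]$; realizing the numbers $c_i/(\mu_R(M)a_i)\in[0,1]$ as the single-variable marginals of a probability distribution on subsets of $\{1,\dots,m\}$ then exhibits $\tfrac1{\mu_R(M)}[\Theta(M)]$ as a convex combination of the $[I_S]$. Hence $C'_{CM}(R^{\scriptscriptstyle \# \#})$ lies in the cone spanned by $\{[I_S]:\emptyset\ne S\subsetneq\{1,\dots,m\}\}$ (allowing $S=\emptyset$ or $\{1,\dots,m\}$ only re-adds $[R^{\scriptscriptstyle \# \#}]=\tfrac1m\sum_i[I_{\{i\}}]$). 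Conversely each $I_S=(\eta,\prod_{j\in S}f_j^{a_j})$ is MCM, exhibited by the size-two matrix factorization of $\xi\eta+f$ coming from $f=\bigl(\prod_{j\in S}f_j^{a_j}\bigr)\bigl(\prod_{j\notin S}f_j^{a_j}\bigr)$, so $[I_S]\in C'_{CM}(R^{\scriptscriptstyle \# \#})$ and the reverse inclusion holds; and each $[I_S]$ is extreme because some linear functional on $\Cl(R^{\scriptscriptstyle \# \#})_{\QQ}$ is positive on $a_i[R^{\scriptscriptstyle \# \#}/\q_i]$ exactly for $i\notin S$, singling $[I_S]$ out among the generators. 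This proves (2).

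Finally, for (3), when $n=2$ the ring $R^{\scriptscriptstyle \# \#}$ is a three-dimensional isolated hypersurface singularity and the assumed resolution $\pi\colon X\to\spec R^{\scriptscriptstyle \# \#}$ is exactly the input of Proposition~\ref{1.6}, so $A_2(R^{\scriptscriptstyle \# \#})\to\overline{A_2(R^{\scriptscriptstyle \# \#})}$ is an isomorphism; together with $A_1(R^{\scriptscriptstyle \# \#})_{\QQ}=A_0(R^{\scriptscriptstyle \# \#})_{\QQ}=0$ (from the computation above) and the isomorphism on the top piece $A_3$, the Riemann--Roch map yields $G_0(R^{\scriptscriptstyle \# \#})_{\QQ}\xrightarrow{\ \sim\ }\overline{G_0(R^{\scriptscriptstyle \# \#})}_{\QQ}$, which sends $[M]$ to its numerical class and hence carries $C'_{CM}(R^{\scriptscriptstyle \# \#})$ onto $C_{CM}(R^{\scriptscriptstyle \# \#})$. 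The step I expect to be the real obstacle is the Chern-class half of the formula for $[\Theta(M)]$ --- tracking the determinant of the Kn\"orrer cokernel through its two canonical trivializations, i.e.\ proving $c_1(\Theta(M))=\sum_i\ell_{R_{(f_i)}}(M_{(f_i)})[\q_i]$; the rest is bookkeeping with the dimension filtration and elementary convexity.
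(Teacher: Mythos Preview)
Your strategy is sound and reaches the same conclusions, but it diverges from the paper's argument at almost every nontrivial step, so let me flag the differences.

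For the structure of $G_0(R^{\scriptscriptstyle\#\#})_{\RR}$ you use the localization sequence in Chow groups along $V(\eta)$ and the identification $R^{\scriptscriptstyle\#\#}/(\eta)\cong R[[\xi]]$; the paper instead proves a separate claim (via Buchweitz's triangulated structure on $\underline{\mathfrak C}$ and Kn\"orrer periodicity) that $G_0(R)/\ZZ[R]\simeq G_0(R^{\scriptscriptstyle\#\#})/\ZZ[R^{\scriptscriptstyle\#\#}]$, and then reads off the basis from the known $G_0(R)$. For the cone, the paper never computes $c_1(\Theta(M))$: it writes $[M]=\sum x_i[I_i]$, reorders $x_1\le\cdots\le x_m$, telescopes to $[M]=(\rank M+x_1-x_m)[R^{\scriptscriptstyle\#\#}]+\sum_{j\ge2}(x_j-x_{j-1})[I_{j\cdots m}]$, and then proves $\rank M\ge x_m-x_1$ by the chain $\rank M=\tfrac12\mu(M)=\mu(N)\ge\max_i\ell_{R_{(f_i)}}N_{(f_i)}/a_i$ (using that $R^{\scriptscriptstyle\#\#}$ has multiplicity $2$ and that Kn\"orrer doubles the number of generators). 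Your route---determinant-bundle transition cocycle plus the probabilistic convexity trick---is more conceptual and avoids the multiplicity-$2$ numerology, but you should double-check the sign: your displayed formula $[\Theta(M)]=\mu_R(M)[R^{\scriptscriptstyle\#\#}]+\sum c_i[R^{\scriptscriptstyle\#\#}/\q_i]$ is inconsistent with your later (correct) $[I_S]=[R^{\scriptscriptstyle\#\#}]+\sum_{i\notin S}a_i[R^{\scriptscriptstyle\#\#}/\q_i]$, since for $M=R/(f_i^{a_i})$ the first gives $[I_i]=[R^{\scriptscriptstyle\#\#}]+a_i[\q_i]$ rather than $[R^{\scriptscriptstyle\#\#}]-a_i[\q_i]$; the relation $\sum a_i[\q_i]=0$ does \emph{not} repair this. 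Once the sign is fixed the marginals argument goes through. For part~(3) the paper does a direct theta-pairing computation ($\theta^{R^{\scriptscriptstyle\#\#}}(I_i,I_j)$ is an explicit intersection length with a sign) rather than invoking Proposition~\ref{1.6}; your shortcut is legitimate since the resolution hypothesis in~(3) forces $R^{\scriptscriptstyle\#\#}$ to have an isolated singularity, but note that you are then also using your Chow-group vanishing $A_0=A_1=0$, which the paper's theta argument does not need. Both papers leave the extremality of the $[I_S]$ to the reader.
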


We refer the reader to \cite{Y} for the terminologies and the basic theory on maximal Cohen-Macaulay modules.

Using Kn\"orrer periodicity \cite{K}, we have the category equivalence
\[
\Omega : \underline{\frak C}(R) \rightarrow \underline{\frak C}(R^{\scriptscriptstyle \# \#}) ,
\]
where $\underline{\frak C}(R)$ (resp.\ $\underline{\frak C}(R^{\scriptscriptstyle \# \#})$) denotes the stable category of maximal Cohen-Macaulay $R$-modules (resp.\ $R^{\scriptscriptstyle \# \#}$-modules).
In order to prove Theorem~\ref{Th6.1}, we need the following claim:

\begin{Claim}\label{claim6-2}
Under the same situation as in Theorem~\ref{Th6.1},
the functor ${\Omega}$ induces the natural isomorphism
\begin{equation}\label{6-1}
G_0(R)/{\Bbb Z}[R] \simeq G_0(R^{\scriptscriptstyle \# \#})/{\Bbb Z}[R^{\scriptscriptstyle \# \#}] .
\end{equation}
\end{Claim}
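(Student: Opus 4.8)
The plan is to exploit the fact that Knörrer periodicity is not merely an equivalence of the stable categories but is compatible with the standard constructions of syzygies and cokernels, so that it induces a map on Grothendieck groups. First I would recall the precise form of the Knörrer functor: for a matrix factorization $(\varphi,\psi)$ of $f$ over $S = k[[x_1,\ldots,x_n]]$ with cokernel $M$, the functor $\Omega$ sends $M$ to the cokernel over $S^{\scriptscriptstyle \#\#} = k[[x_1,\ldots,x_n,\xi,\eta]]$ of the matrix factorization
\[
\begin{pmatrix} \varphi & \eta\cdot 1 \\ -\xi\cdot 1 & \psi \end{pmatrix},
\qquad
\begin{pmatrix} \psi & -\eta\cdot 1 \\ \xi\cdot 1 & \varphi \end{pmatrix}
\]
of $\xi\eta + f$. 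The first task is to check that $M \mapsto [\Omega M]$ is additive on short exact sequences of maximal Cohen–Macaulay modules up to the class of the free module, so that it descends to a homomorphism $\overline{\Omega}\colon G_0(R)/\ZZ[R] \to G_0(R^{\scriptscriptstyle \#\#})/\ZZ[R^{\scriptscriptstyle \#\#}]$; here I would use that in the hypersurface case every finitely generated module has a MCM approximation whose class in $G_0$ differs from $[M]$ by a multiple of $[R]$ (so $G_0(R)/\ZZ[R]$ is generated by MCM classes), and that a short exact sequence of MCM modules gives, after one syzygy, a short exact sequence of matrix factorizations to which $\Omega$ applies termwise.

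Next I would construct the inverse. The double shift $\Omega^2$ is isomorphic (stably) to the identity by Knörrer periodicity applied twice, or more directly one has the quasi-inverse functor given by the analogous "reduction" construction, and on the level of $G_0$ modulo free modules these two composites are the identity; this gives that $\overline{\Omega}$ is an isomorphism of abelian groups once we know both directions are well-defined homomorphisms. Then I would identify the map explicitly on generators in order to match it with the statements of Theorem~\ref{Th6.1}: under the standing hypothesis that modules of dimension $<\dim R$ vanish in $G_0(R)_{\QQ}$ (so the rank map identifies $G_0(R)/\ZZ[R]$ with $\overline{F_{d-1}G_0(R)}$ modulo torsion, and similarly downstairs), the class $[R/(f_i)]$, which is the cokernel of the matrix factorization $(f_i^{a_i}, f/f_i^{a_i})$ of $f$, is carried by $\Omega$ to the cokernel of the $2\times 2$ block matrix factorization above, which one recognizes as $[I_i] = [(\eta, f_i^{a_i})]$ in $G_0(R^{\scriptscriptstyle \#\#})$; this is exactly the correspondence needed to read off the descriptions of $C'_{CM}$ in parts (1) and (2).

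The main obstacle I anticipate is the well-definedness of $\overline{\Omega}$ as a map of Grothendieck groups: the stable equivalence a priori only respects direct sums and syzygies, not arbitrary short exact sequences, so I must argue that $\Omega$ takes a short exact sequence $0 \to M' \to M \to M'' \to 0$ of MCM modules to one whose alternating sum of classes is zero modulo $[R^{\scriptscriptstyle \#\#}]$. The cleanest route is probably to first reduce to the case where the sequence comes from a short exact sequence of the defining matrix factorizations (possible after replacing the modules by high syzygies, which only changes signs and adds free summands), note that Knörrer's block construction is functorial and exact in the matrix-factorization entries, and hence preserves such sequences on the nose; the free correction terms are then bookkeeping. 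A secondary technical point is checking that the identification $[\Omega(R/(f_i))] = [I_i]$ survives passage to $G_0_{\QQ}$ — i.e., that the lower-dimensional "error" modules introduced when writing $R/(f_i)$ as a matrix-factorization cokernel are killed by the standing hypothesis — but this is precisely what that hypothesis is designed to handle.
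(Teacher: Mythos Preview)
Your approach is workable but takes a substantially more hands-on route than the paper. The paper's proof is a three-line appeal to existing machinery: since $R$ and $R^{\scriptscriptstyle\#\#}$ are Gorenstein, Buchweitz shows that the stable category $\underline{\mathfrak C}(R)$ carries a triangulated structure and that its Grothendieck group $K_0(\underline{\mathfrak C}(R))$ as a triangulated category is naturally isomorphic to $G_0(R)/\ZZ[R]$. Kn\"orrer periodicity is then a \emph{triangle} equivalence, so it induces an isomorphism on $K_0$, and the claim follows immediately. In this framework the very obstacle you flag as ``main'' --- that a stable equivalence a priori respects only direct sums and syzygies, not arbitrary short exact sequences --- evaporates, because a short exact sequence of MCM modules is a distinguished triangle in $\underline{\mathfrak C}(R)$ and triangle equivalences preserve those by definition.

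What your approach buys is an explicit, matrix-factorization-level verification that avoids invoking the triangulated formalism; this is more elementary but considerably longer, and the bookkeeping you anticipate (syzygy shifts, free correction terms) is exactly the content that Buchweitz's identification $K_0(\underline{\mathfrak C}(R))\simeq G_0(R)/\ZZ[R]$ packages once and for all. One small point of confusion in your proposal: the claim itself does not require the standing hypothesis that lower-dimensional modules vanish in $G_0(R)_{\QQ}$ --- the isomorphism holds integrally for any hypersurface --- and $R/(f_i)$ is already MCM over $R$, so there are no ``error modules'' to kill when identifying $\Omega(R/(f_i^{a_i}))$ with $I_i$. That hypothesis only enters later, in the computation of $G_0(R)_{\RR}$ and the cone.
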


\begin{proof}
By Theorem~4.4.1 in \cite{B}, $\underline{\frak C}(R)$ has a structure of a
triangulated category since $R$ is a Gorestein ring.
We can define the Grothendieck group $K_0(\underline{\frak C}(R))$ 
as a triangulated category.
By 4.9 in \cite{B}, we have an isomorphism
\[
K_0(\underline{\frak C}(R)) \simeq G_0(R)/\ZZ [R] .
\]
Since $R^{\scriptscriptstyle \# \#}$ is Gorenstein, we also obtain
\[
K_0(\underline{\frak C}(R^{\scriptscriptstyle \# \#})) \simeq G_0(R^{\scriptscriptstyle \# \#})/\ZZ [R^{\scriptscriptstyle \# \#}] .
\]
Since Kn\"orrer periodicity $\Omega : \underline{\frak C}(R) \rightarrow \underline{\frak C}(R^{\scriptscriptstyle \# \#})$ is a category equivalence
as triangulated categories, we have
\[
K_0(\underline{\frak C}(R)) \simeq K_0(\underline{\frak C}(R^{\scriptscriptstyle \# \#})) .\]
\end{proof}

Now, we start to prove Theorem~\ref{Th6.1}.

It is easy to prove (1). 
We omit a proof.

We shall prove (2).
Since $[N] = 0$ in $G_0(R)_{\Bbb Q}$ for any $R$-module 
$N$ with $\dim N < \dim R$, it is easy to see
\[
G_0(R)_{\RR} = \oplus_{i = 1}^m{\RR}[R/(f_i)]
 = \oplus_{i = 1}^m{\RR}[R/(f_i^{a_i})]
\]
and
\[
C'_{CM}(R) = \sum_{i = 1}^m {\RR}_{\ge 0}[R/(f_i)]
 = \sum_{i = 1}^m{\RR}_{\ge 0}[R/(f_i^{a_i})] .
\]
Here remark that $[R/(f_i^{a_i})] = a_i [R/(f_i)]$.

By Kn\"orrer periodicity, we have a bijection between the set of isomorphism classes
of indecomposable maximal Cohan-Macaulay $R$-modules
and that of $R^{\scriptscriptstyle \# \#}$.
For an  indecomposable maximal Cohan-Macaulay $R$-module $N$,
we denote by  $\Omega(N)$ the corresponding 
 indecomposable maximal Cohan-Macaulay $R^{\scriptscriptstyle \# \#}$-module.
Then, by definition, we have $\Omega(R/(f_{j_1}^{a_{j_1}}\cdots f_{j_t}^{a_{j_t}})) = 
I_{j_1j_2 \cdots j_t}$.

Since $[R] = \sum_{i = 1}^m[R/(f_i^{a_i})]$,
we have
\[
G_0(R)_{\RR}/{\RR}[R] 
 = \frac{\oplus_{i = 1}^m{\RR}[R/(f_i^{a_i})]}{{\RR}(\sum_{i = 1}^m[R/(f_i^{a_i})])} .
\]
By Claim~\ref{claim6-2}, we have
\begin{equation}\label{20}
G_0(R^{\scriptscriptstyle \# \#})_{\RR}/{\RR}[R^{\scriptscriptstyle \# \#}] 
= \frac{\oplus_{i = 1}^m{\RR}[I_i]}{{\RR}(\sum_{i = 1}^m[I_i])} .
\end{equation}
Since 
\[
[R/I_{j_1j_2 \cdots j_t}] = \sum_{i = 1}^t[R/I_{j_i}] ,
\]
we have
\[
[I_{j_1j_2 \cdots j_t}] + (t-1)[R^{\scriptscriptstyle \# \#}] = 
\sum_{i = 1}^t[I_{j_i}] .
\]
In particular, we have
\begin{equation}\label{21}
m[R^{\scriptscriptstyle \# \#}] = \sum_{i = 1}^m[I_i] .
\end{equation}
By (\ref{20}) and (\ref{21}), we have
\[
G_0(R^{\scriptscriptstyle \# \#})_{\RR} = \oplus_{i = 1}^m{\RR}[I_i] .
\]

Let $C"$ be the cone in $G_0(R)_{\RR}$
spanned by 
by 
\[
\{ [I_{j_1j_2\cdots j_t}] \mid \emptyset \neq \{ j_1, \ldots, j_t \} \subsetneq \{1,~\ldots, n \} \}.
\]
We shall prove $C" = C'_{CM}(R^{\scriptscriptstyle \# \#})$.
By definition, $C" \subset C'_{CM}(R^{\scriptscriptstyle \# \#})$.
It is sufficient to show that $[M]$ is in $C"$ for any 
indecomposable maximal Cohen-Macaulay $R^{\scriptscriptstyle \# \#}$-module
$M$.
By (\ref{21}), $[R^{\scriptscriptstyle \# \#}] \in C"$.

Let $M$ be an indecomposable maximal Cohen-Macaulay $R^{\scriptscriptstyle \# \#}$-module with $M \not\simeq R^{\scriptscriptstyle \# \#}$.

Suppose that $[M] = \sum_{i=1}^{n} x_i[I_i]$ 
in $G_0(R^{\scriptscriptstyle \# \#})_{\RR}$ where $x_i$'s are rational numbers. 
Without loss of generalities we may assume that  $x_1 \leq x_2\leq \dots \leq x_n$. We shall rewrite the above equation as follows:
\begin{eqnarray*}
& & [M] \\
& = & x_1([I_1]+\dots + [I_n])+ (x_2-x_1)([I_2]+\dots+ [I_n]) +\cdots + (x_n-x_{n-1})[I_n] \\
 &  = &  x_1([I_{12\cdots n}] +(n-1)[R^{\scriptscriptstyle \# \#}]) + (x_2-x_1)([I_{2\cdots n}] +(n-2)[R^{\scriptscriptstyle \# \#}]) + \cdots + (x_n-x_{n-1})[I_n]\\
       &  = &  (\sum_{i = 1}^n{x_i} +(x_1-x_n))[R^{\scriptscriptstyle \# \#}] + (x_2-x_1)[I_{2\cdots n}] + \dots  + (x_n-x_{n-1})[I_n]\\
       &  = &  (\rank(M) +(x_1-x_n))[R^{\scriptscriptstyle \# \#}] + (x_2-x_1)[I_{2\cdots n}] + \cdots  + (x_n-x_{n-1})[I_n]
\end{eqnarray*}
As $x_{i+1} -x_i\geq 0$ for all $i$ it remains to show that $\rank(M) \geq x_n-x_1$. 
Using Kn\"orrer periodicity \cite{K}, 
we can assume our module $M$ has a form $M= \Omega(N)$ 
for some indecomposable maximal Cohen-Macaulay $R$-module $N$ 
with $N \not\simeq R$.

We claim that $\mu(M) = 2\rank(M)$.
Since $R^{\scriptscriptstyle \# \#}$ has multiplicity $2$ and
$M$ has no free summand,
we have $$2\rank(M) \geq \mu(M) = \rank(M) + \rank({\rm Syz}^1_{R^{\scriptscriptstyle \# \#}} M),$$ which implies $\rank(M) \geq \rank({\rm Syz}^1_{R^{\scriptscriptstyle \# \#}} M)$. However, as ${\rm Syz}^2_{R^{\scriptscriptstyle \# \#}} M = M$, we must have equality.   

Since the functor $\Omega$ doubles the number of generators
for a maximal Cohen-Macaulay $R$-module
with no free summand, 
we have $\rank(M) = \mu(N)$.  
Suppose $[N] = \sum_i y_i[R/(f_i^{a_i})]$ in $G_0(R)_{\Bbb R}$, 
here 
\begin{equation}\label{22}
y_i a_i =\length_{R_{(f_i)}}N_{(f_i)} . 
\end{equation}
Then we have 
\[
\sum_{i=1}^{n} x_i[I_i] = [M] = [\Omega(N)] =
\sum_i y_i[\Omega(R/(f_i^{a_i}))] = \sum_i y_i[I_i] \ \ 
\mbox{in $G_0(R^{\scriptscriptstyle \# \#})/{\RR}[R^{\scriptscriptstyle \# \#}]$} .
\]
By (\ref{20}),  we have
\[
x_1 - y_1 = x_2 - y_2 = \cdots = x_m - y_m .
\]
In particular, we have $x_m-x_1 = y_m-y_1$. 
So to finish the proof we need to show that $\mu(N)\geq y_m-y_1$. 

As we have a surjection $R^{\mu(N)} \to N$, by localizing at $(f_m)$ 
we have a surjection
\[
R_{(f_m)}^{\mu(N)} \to N_{(f_m)} .
\]
By counting the length, we have
\[
\mu(N) a_m \ge y_m a_m 
\]
by (\ref{22}).
Since $y_1 \ge 0$ by (\ref{22}),
we get $\mu(N)\geq y_m\geq y_m-y_1$.
We have proved that $[M]$ is in $C"$.

We leave it to the reader to show that these rays are the ``minimal" generators 
of the cone.

Next, we shall prove (3).
Assume that
$\sum_{i = 1}^mq_i[I_i]$ ($ \in G_0(R^{\scriptscriptstyle \# \#})_{\QQ}$)
 is numerically equivalent to $0$.
We may assume that $q_1 \le q_2 \le \cdots \le q_m$.

If $q_1 = q_m$, then $\sum_{i = 1}^mq_i[I_i] = mq_1[R^{\scriptscriptstyle \# \#}]$.
Since $[R^{\scriptscriptstyle \# \#}] \neq 0$ in 
$\overline{G_0(R^{\scriptscriptstyle \# \#})}_{\QQ}$,
we have $q_1 = 0$.

Assume that $q_1 < q_m$.
Then,
\[
\sum_{i = 1}^mq_i[I_i] = q_1m[R^{\scriptscriptstyle \# \#}]
+ \sum_{i = 2}^m(q_i-q_1)[I_i] .
\]
It is easy to see that
\[
\theta^{R^{\scriptscriptstyle \# \#}}(I_i, I_j) = 
\left\{
\begin{array}{lll}
\ell_R(R/(f_i^{a_i}, f_1^{a_1}\cdots f_{i-1}^{a_{i-1}}f_{i+1}^{a_{i+1}}\cdots f_n^{a_n})) & &  (i=j) \\
- \ell_R(R/(f_i^{a_i}, f_j^{a_j})) & &  (i\neq j) ,
\end{array}
\right.
\]
where $\theta^{R^{\scriptscriptstyle \# \#}}$ is the Hochster's
theta pairing.
Then, we have
\[
\theta^{R^{\scriptscriptstyle \# \#}}(I_1, \sum_{i = 1}^mq_i[I_i])
= \sum_{i = 2}^m(q_i-q_1)\theta^{R^{\scriptscriptstyle \# \#}}(I_1,I_i)
= - \sum_{i = 2}^n(q_i-q_1)\ell_R(R/(f_1^{a_1}, f_i^{a_i})) < 0 
\]
because $\theta^{R^{\scriptscriptstyle \# \#}}(I_1, R^{\scriptscriptstyle \# \#})=0$.
By Corollary~6.2 (1) in \cite{DK1}, 
$\sum_{i = 1}^mq_i[I_i]$ is not numerically equivalent to $0$.
\qed

We remark that, if $f \in (x,y)^4S$, then
$R^{\scriptscriptstyle \# \#}$ is not of finite representation type.

If $R$ is a Cohen-Macaulay local ring, the rank of the Grothendieck group
of modules of finite length and finite projective dimension modulo numerical equivalence
coincides with the rank of $\overline{G_0(R)}$ by Proposition~2 in \cite{RS} and 
Theorem~3,1, Remark~3.5 in \cite{K23}.
By (3) of Theorem~\ref{Th6.1}, we know that the rank of the Grothendieck group
of modules of finite length and finite projective dimension modulo numerical equivalence
is equal to $m$ for $R = k[[x_1, x_2, \xi, \eta]]/(\xi\eta + f)$, where 
$0 \neq f = f_1^{a_1}\cdots f_m^{a_m} \in k[[x_1,x_2]]$.

\end{document}